\numberwithin{equation}{section}
\newtheorem{theorem}{Theorem}[section]
\newtheorem{proposition}[theorem]{Proposition}
\newtheorem{prop}[theorem]{Proposition}
\def\eps{\varepsilon }
\def\beq{\begin{equation}}
\def\eeq{\end{equation}}
\def\bb1{{1\!\!1}}
\def\rit{{\Bbb R}}
\def\eps{\varepsilon}
\begin{document}

\centerline{\bf \Large Instabilities  of shear layers}

\bigskip

\bigskip

\centerline{
Dongfen Bian \footnote{Beijing Institute of Technology, School of Mathematics and Statistics, Beijing {\rm 100081}, China, biandongfen@bit.edu.cn},
Emmanuel Grenier \footnote{UMPA, CNRS  UMR $5669$,Ecole Normale Sup\'erieure de Lyon, Lyon, France, Emmanuel.Grenier@ens-lyon.fr}
}

\bigskip

\bigskip

\centerline{ \it December $2023$}


\subsubsection*{Abstract}


This article gathers notes of two lectures given at Grenoble's University in June $2023$, and is an introduction to recent 
works on shear layers, in collaboration with D. Bian, Y. Guo,  T. Nguyen and B. Pausader.

\medskip

\noindent Updated versions will be posted on Arxiv.

\tableofcontents

\newpage


\section{Introduction}


In this paper, we are interested in the inviscid limit of the incompressible Navier Stokes equations with Dirichlet boundary condition in the half
plane. The classical incompressible Navier Stokes equations in the half space $\Omega = \{ y > 0 \}$ read
\beq \label{NS1}
\partial_t u^\nu + (u^\nu \cdot \nabla) u^\nu + \nabla p^\nu - \nu \Delta u^\nu = 0,
\eeq
\beq \label{NS2}
\nabla \cdot u^\nu = 0,
\eeq
\beq \label{NS3}
u^\nu = 0 \quad \hbox{when} \quad y = 0.
\eeq
As $\nu$ goes to $0$, formally, Navier Stokes equations degenerate into the incompressible Euler equations
\beq \label{Eu1}
\partial_t u + (u \cdot \nabla) u + \nabla p  = 0,
\eeq
\beq \label{Eu2}
\nabla \cdot u = 0,
\eeq
\beq \label{Eu3}
u \cdot n = 0 \quad \hbox{when} \quad y = 0,
\eeq
where $n$ is a vector normal to the boundary.

Despite many efforts, the question of the convergence of solutions of Navier Stokes equations to solutions of Euler equations
is widely open. A classical approach is to introduce the so called Prandtl's boundary layer, of size $O(\sqrt{\nu})$. This leads
to look for $u^\nu$ under the form 
\beq \label{Prandtl}
u^\nu(t,x,y) = u^{Euler}(t,x,y) + u^{Prandtl}(t,x,\nu^{-1/2} y) + O(\sqrt{\nu})_{L^\infty},
\eeq
where $u^{Euler}$ solves Euler equations and $u^{Prandtl}$ is a corrector, located near the boundary, solution of the so called
Prandtl's equations (see \cite{Reid} for instance). Such an expansion is however far from being fully justified.
Roughly speaking, the attempts to answer this question follow four different directions

\begin{itemize}

\item Proof of (\ref{Prandtl}) in small time for initial data with analytic type regularity. This direction has been pioneered by 
R.E. Caflish and M. Sammartino \cite{Caf1,Caf2} and has later been extended to data with Gevrey regularity, or to initial data
with vorticity supported away from the boundary. These results in particular show that the formal analysis of Prandtl is valid in the particular case
of analytic data \cite{DGV2,Maekawa}.
In that direction, note also the construction of stationary solutions for Blasius' boundary layer \cite{Blasius1},\cite{Blasius2}. 
We in particular refer the readers to the works of N. Masmoudi, D. G\'erard-Varet, Y. Maekawa, Y. Guo, V. Vicol.

\item Proof that (\ref{Prandtl}) does not hold because Prandtl's equation has singularities: blow up \cite{EE}, 
boundary separation \cite{DM}.

\item Proof that (\ref{Prandtl}) is false because of underlying linear instabilities in the case of solutions with Sobolev regularity. 
These instabilities lead to the successive onset of various sublayers, of size $\nu^{1/2}$ as expected, but also of sizes
$\nu^{3/4}$ or $\nu^{13/16}$ as will be detailed in this article. 
 This includes the proof of the linear instability of shear flows \cite{GGN3}, 
 the instability of Prandtl layers in $L^\infty$ for a particular class of initial data \cite{GN1},
and the instability of Blasius boundary layer under Sobolev perturbations \cite{Blasius3}.

\item Kato's result, which  is a singular and isolated result \cite{Kato}:
$u^\nu$ converges to $u$ in $L^2([0,T] \times \Omega)$ if and only if
$$
\nu \int_0^T \int_{d(x,\partial \Omega) \le  \nu} \| \nabla u^\nu \|^2 \, dx \, dt \to 0
$$
as $\nu$ goes to $0$.
Namely, solutions of Navier Stokes equations converge to a solution of Euler equation if the energy dissipation in a layer of size
$O(\nu)$ goes to $0$. 
A striking feature of Kato's result is that the size which appears is $\nu$ and not $\sqrt{\nu}$, the classical size of Prandtl boundary
layer. Therefore the question of the convergence of solutions of Navier Stokes equations is linked to the capability of these solutions
to create boundary layers of size $\nu$, starting from a classical Prandtl layer of size $\sqrt{\nu}$.

\end{itemize}

In a paradoxical way, the first two approaches manage to justify classical results from Physics, namely the importance
of Prandtl boundary layer \cite{Caf1,Caf2}, or a classical scenario of boundary separation \cite{DM}, but at the expense of
the analyticity of the solution, or its time independence. 


\medskip

The third and fourth approaches are also paradoxical, since, whereas the requirements on the smoothness of the initial data
are natural ($H^s$ or $L^2$ regularity), the boundary layers which appear in these studies are unphysical, like $\nu^{13/16}$ or $\nu$. 
They do not appear in the classical physics literature and seem mere mathematical artefacts.

\medskip

The aim of these lectures is to discuss the third approach and to discuss this paradox.
The plan is the following. In section $2$ we introduce the Rayleigh and Orr Sommerfeld equations, which are studied in more details in 
section $3$ and $4$, where we obtain optimal bounds on the solutions to linearised Euler and Navier Stokes equations.
Sections $5$ and $6$ are devoted to the study of two different kinds of instabilities of shear flows. In section $7$ we discuss
multilayers instabilities, and then give perspectives in section $8$.

\medskip

Before jumping into the details we state the two main results that are discussed in these lectures.
We will consider particular solutions of the Navier Stokes equations called "shear layers", namely solutions $V_0^\nu(t,y)$ of the form
\beq \label{heat0}
V_0^\nu(t,y) = \left(
\begin{array}{cc}
U_s(t,\nu^{-1/2} y) \cr
0 
\end{array} \right),
\eeq
where $U_s$ is a given smooth function.
For such flows, Navier Stokes equations reduce to the heat equation
\beq \label{heat1}
\partial_t U_s - \partial_{yy} U_s = 0,
\eeq
together with the boundary condition 
\beq \label{heat2}
U_s(t,0) = 0 .
\eeq
Note in particular that $V_0^\nu$ is analytic for any time, provided $U_s(0,\cdot)$ is analytic in $y$. We define
$$
U_+ = \lim_{y \to \infty} U_s(0,y),
$$
and will assume that $U_+ \ne 0$.

It turns out that non trivial shear layers are always linearly unstable provided the viscosity is small enough \cite{GGN3}.
The nature of the instability depends on whether the profile is stable for linearised Euler equations or not (a notion detailed below).
This leads to the following two different results:

\begin{theorem} \label{theomain}
Let $U_s(0,\cdot)$ be a smooth profile which is spectrally unstable for Euler equations. Then
for any $N$ and $s$ arbitrarily large, there exists a sequence of solutions $V^\nu$ of Navier Stokes equations
with forcing term $F^\nu$ and a sequence of times $T^\nu$ such that
\beq 
\| V^\nu(0,\cdot,\cdot) - V_0^\nu(0,\cdot) \|_{H^s} \le \nu^N,
\eeq
\beq 
\| F^\nu \|_{L^\infty([0, T^\nu], H^s)} \le \nu^N,
\eeq
\beq
\| V^\nu(T^\nu,\cdot,\cdot) - V_0^\nu(T^\nu,\cdot) \|_{L^\infty} \ge \sigma  > 0,
\eeq
\beq 
T^\nu \sim C_0 \log \nu^{-1} ,
\eeq
and such that, as $t \to T^\nu$,  $V^\nu(1\cdot,\cdot)$ exhibits the scales $\nu^{1/2}$, $\nu^{3/4}$ and $\nu^{13/16}$ in the $y$ variable.
\end{theorem}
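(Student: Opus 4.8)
The plan is to construct the unstable solution by a nonlinear instability argument built on top of the linear spectral instability of shear layers. First I would fix the spectrally unstable Euler profile $U_s(0,\cdot)$ and invoke the linear instability result \cite{GGN3}: for $\nu$ small there exists an unstable eigenmode of the Orr--Sommerfeld operator, i.e. a solution of the linearised Navier--Stokes equations around $V_0^\nu$ of the form $e^{\lambda t/\sqrt\nu} \psi_\nu(x,y)$ (in suitable rescaled variables) with $\R \lambda > 0$ bounded below uniformly in $\nu$, and with $\psi_\nu$ enjoying the sharp resolvent/semigroup bounds obtained in sections $3$--$4$. Because the background profile is only quasi-stationary (it solves the heat equation \eqref{heat1} and evolves on the slow time scale $O(1)$, hence is essentially frozen on the instability time scale $O(\sqrt\nu \log\nu^{-1})$), the linearised operator is effectively autonomous over the relevant window, and the maximal unstable eigenvalue controls the growth of the full linearised semigroup.

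Next I would set up the nonlinear iteration. Write $V^\nu = V_0^\nu + v$, derive the equation for the perturbation $v$, and choose the initial datum $v(0) = \nu^N \cdot(\text{normalised unstable eigenfunction})$, which guarantees $\|V^\nu(0)-V_0^\nu(0)\|_{H^s}\le \nu^N$. Introduce the forcing $F^\nu$ precisely to absorb the error made by treating the slowly-varying profile as frozen and to keep the construction clean; one checks $\|F^\nu\|_{L^\infty([0,T^\nu],H^s)}\le\nu^N$ by exploiting that $\partial_t V_0^\nu = O(1)$ in smooth norms while the correction it induces is multiplied by small factors. The heart of the matter is the bootstrap: as long as $\|v(t)\|$ stays below a small threshold, the linear part drives exponential growth like $\nu^N e^{\R\lambda\, t/\sqrt\nu}$, while the quadratic nonlinearity $(v\cdot\nabla)v$ produces a correction of size $\sqrt\nu^{-1}\|v\|^2$ in the Duhamel integral; comparing $\nu^{2N}e^{2\R\lambda t/\sqrt\nu}$ against $\nu^N e^{\R\lambda t/\sqrt\nu}$ shows the nonlinear term stays subdominant until $v$ reaches size $O(1)$, which happens at
\beq
T^\nu \sim \frac{\sqrt\nu}{\R\lambda}\,\big(N\log\nu^{-1} + \log\sigma^{-1}\big) \sim C_0 \log\nu^{-1}.
\eeq
Wait --- the time scale here should be examined: the eigenvalue grows like $\lambda/\sqrt\nu$, so reaching amplitude $1$ from $\nu^N$ takes time $\sim \sqrt\nu\, N\log\nu^{-1}$, and one recovers the stated $T^\nu\sim C_0\log\nu^{-1}$ only if the growth rate is $O(1)$ rather than $O(\nu^{-1/2})$; I would reconcile this by tracking the precise normalisation of the Euler-unstable eigenvalue in the physical (unrescaled) variables, where the growth rate is indeed $O(1)$ and the instability time is genuinely $O(\log\nu^{-1})$. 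At $t=T^\nu$ the perturbation has reached amplitude $\sigma$ in $L^\infty$, giving the lower bound.

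Finally, to exhibit the three scales $\nu^{1/2}$, $\nu^{3/4}$, $\nu^{13/16}$ in the $y$ variable, I would examine the spatial structure of the unstable eigenfunction $\psi_\nu$ produced by the Orr--Sommerfeld analysis: it is a superposition of an inviscid Rayleigh part (varying on the Prandtl scale $\nu^{1/2}$) and viscous sublayer corrections concentrated near the critical layer and near the boundary, whose widths are dictated by the balance between convection and viscous diffusion in the Orr--Sommerfeld equation --- this is exactly where the anomalous exponents $3/4$ and $13/16$ emerge (the $\nu^{3/4}$ layer from the critical-layer balance, the $\nu^{13/16}$ one from a secondary sublayer inside it). Since the nonlinear solution is, up to lower-order corrections, a growing multiple of $\psi_\nu$ throughout $[0,T^\nu]$, it inherits these scales. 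I expect the main obstacle to be the bootstrap closure: one needs the semigroup bounds from sections $3$--$4$ to be strong enough (in the right norms, with no loss of $\nu$-powers beyond what the eigenvalue gap affords) that the quadratic nonlinearity, after multiplication by the singular factor from the viscous rescaling, can still be controlled over the full logarithmic time window --- in particular handling the fact that the unstable eigenfunction has sharp gradients on the thin sublayers, so that $\|(v\cdot\nabla)v\|$ is much larger than $\|v\|^2$ naively, and ensuring these derivative losses are compensated by the spectral gap.
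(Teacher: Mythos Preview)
Your bootstrap/Duhamel scheme for the nonlinear instability is in the right spirit and, combined with the Orr--Sommerfeld eigenmode constructed from the Rayleigh unstable mode, would indeed produce the scales $\nu^{1/2}$ and $\nu^{3/4}$. However, there is a genuine gap concerning the scale $\nu^{13/16}$: it does \emph{not} live in the primary Orr--Sommerfeld eigenfunction $\psi_\nu$, and a solution that is ``up to lower-order corrections a growing multiple of $\psi_\nu$'' will never display it. In the paper the $\nu^{13/16}$ scale arises from a \emph{secondary instability}. First the primary instability is run (section~5) until the $\nu^{3/4}$ boundary sublayer has reached amplitude $O(1)$; at that moment this sublayer is itself a shear profile $U_1^\nu$ (exponential in the rescaled variable $Y_2 = y/\nu^{3/4}$, with effective viscosity $\nu_2 = \nu^{1/4}$). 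Since $U_1^\nu$ is concave it is Euler-stable, so one must invoke the viscous Tollmien--Schlichting instability of section~4.2 for it. That secondary Orr--Sommerfeld problem produces a critical layer at $Y_2 \sim \nu_2^{1/4} = \nu^{1/16}$, i.e.\ $y \sim \nu^{3/4}\cdot\nu^{1/16} = \nu^{13/16}$ (section~7). So the construction is genuinely two-step --- grow, rescale onto the newly created sublayer, grow again --- and one also has to argue that the secondary wave packet does not travel far enough in $X_2$ to feel the slow horizontal variation of $U_1^\nu$ (the localisation argument at the end of section~7). Your single-layer bootstrap cannot capture this.

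A secondary remark: your attribution of the $\nu^{3/4}$ scale to ``the critical-layer balance'' is not quite right either. For an Euler-unstable profile $\Im c_{1,0}>0$, so the critical point $y_c$ is off the real axis and there is no real critical layer in the primary mode. The $\nu^{3/4}$ scale is the \emph{boundary} sublayer at $y=0$: the Rayleigh eigenfunction $\phi_{1,0}$ satisfies $\phi_{1,0}(0)=0$ but not $\partial_y\phi_{1,0}(0)=0$, and the correction $\phi_1^{bl}$ needed to enforce no-slip is determined by balancing $\nu_1\partial_{Y_1}^4$ against $c_1\partial_{Y_1}^2$ in (\ref{hie2}), giving width $\nu_1^{1/2}=\nu^{1/4}$ in $Y_1$, hence $\nu^{3/4}$ in $y$.
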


If we start with a profile $U_s$ which is spectrally stable for Euler equations, for instance a monotonic and concave profile, 
the instability process is different
and we have
\begin{theorem} \label{theofirst2}
Let $U_s(0,\cdot)$ be a smooth profile which is spectrally stable for Euler equations. Then
for any $N$ and $s$ arbitrarily large, there exists a sequence of solutions $V^\nu$ of Navier Stokes equations
with forcing term $F^\nu$,
and a sequence of times $T^\nu$ such that
\beq 
\| V^\nu(0,\cdot,\cdot) - V_0^\nu(0,\cdot) \|_{H^s} \le \nu^N,
\eeq
\beq 
\| F^\nu \|_{L^\infty([0, T^\nu], H^s)} \le \nu^N,
\eeq
\beq
\| V^\nu(T^\nu,\cdot,\cdot) - V_0^\nu(T^\nu,\cdot) \|_{L^\infty} \ge \sigma \nu^{1/4} > 0,
\eeq
\beq 
T^\nu \sim C_0 \log \nu^{-1} ,
\eeq
and such that, as $t \to T^\nu$, $u^\nu$ exhibits the scales $\nu^{1/2}$ and $\nu^{5/8}$ in the $y$ variable. 
\end{theorem}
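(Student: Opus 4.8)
The plan is to deduce this \emph{forced} instability from a spectral (Tollmien--Schlichting) instability of the frozen shear layer, to realise the corresponding growing mode as an explicit high-order approximate solution of the forced equations, and to read off the four assertions from the structure of that mode.

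\emph{Step 1: the viscous mode.} First I would linearise Navier--Stokes about the frozen profile $U_s(t_0,\cdot)$, $t_0\ge 0$. Since $U_s(t_0,\cdot)$ is spectrally stable for Euler, the Rayleigh equation has no unstable eigenvalue, so the instability must be purely viscous. By the Orr--Sommerfeld analysis of Section~4 (in the spirit of \cite{GGN3}), for every small $\nu$ one obtains a wavenumber $\alpha^\nu$ and an eigenvalue $c^\nu$ with $\Im c^\nu>0$ of the Orr--Sommerfeld operator linearised about $U_s(t_0,\cdot)$. After the shear-layer rescaling $y=\sqrt\nu\,z$ this is an Orr--Sommerfeld problem with effective Reynolds number $\nu^{-1/2}$, and one selects $\alpha^\nu$ (of order $\nu^{-1/4}$ in the original variables, on the relevant branch of the neutral curve) so that the linear growth rate $\gamma^\nu:=\Re(-i\alpha^\nu c^\nu)$ is bounded below uniformly in $\nu$. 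The eigenfunction $\phi^\nu$ (a function of the boundary-layer variable $z=\nu^{-1/2}y$) then carries exactly the two scales in the statement --- an inviscid part on the Prandtl scale $\nu^{1/2}$ and a near-wall viscous sublayer of width $\nu^{5/8}$ (the Stokes-layer balance $\nu\partial_y^4\sim\alpha^\nu(U_s-c^\nu)\partial_y^2$ gives width $\sqrt{\nu/\alpha^\nu}=\nu^{5/8}$) --- and, normalised so that the mode's tangential velocity has unit $L^\infty$-norm, its normal velocity is of size $\nu^{1/4}$, which is the origin of the $\nu^{1/4}$ in the conclusion.

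\emph{Step 2: approximate solution and forcing.} Next I would fix $\delta=\nu^{N'}$ with $N'$ large (depending on the prescribed $N,s$) and set
\[
V^\nu_{\mathrm{app}}=V_0^\nu+\sum_{k=1}^{M}\delta^{k}V_k^\nu,\qquad
V_1^\nu=\Re\!\left(e^{\lambda^\nu t}\,\phi^\nu\,e^{i\alpha^\nu x}\right),\quad\lambda^\nu=-i\alpha^\nu c^\nu,
\]
where $V_k^\nu$ ($2\le k\le M$) is the $k$-th nonlinear correction: a solution of a forced linear system (Orr--Sommerfeld-type at wavenumbers $\alpha^\nu,\dots,k\alpha^\nu$, plus a mean-flow correction at wavenumber zero) whose source is the quadratic interaction of the $V_j^\nu$, $j<k$, and satisfying $V_k^\nu|_{y=0}=0$ and $\nabla\cdot V_k^\nu=0$. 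Each $V_k^\nu$ is estimated, with only algebraic-in-$\nu$ losses per order, by the resolvent and semigroup bounds for linearised Navier--Stokes of Sections~3--4. To absorb the drift of $U_s$ over the long interval $[0,T^\nu]$ I would promote this to a WKB construction, tracking $\lambda^\nu(t),\phi^\nu(t)$ and replacing $e^{\lambda^\nu t}$ by $\exp\int_0^t\lambda^\nu(s)\,ds$, the mismatch becoming one more controllably small source. One then simply \emph{takes} $V^\nu:=V^\nu_{\mathrm{app}}$ and \emph{defines} the forcing $F^\nu$ to be its Navier--Stokes residual $\partial_tV^\nu+(V^\nu\cdot\nabla)V^\nu+\nabla p^\nu-\nu\Delta V^\nu$; since $V^\nu$ is divergence-free and vanishes at $y=0$, it solves the forced Dirichlet problem, and by construction $F^\nu$ is the truncation term (of size $(\delta\,e^{\Re\lambda^\nu t})^{M+1}$) plus the WKB drift error, both $\le\nu^N$ in $H^s$ on $[0,T^\nu]$ once $M$ and $N'$ are large. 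Likewise $\|V^\nu(0,\cdot,\cdot)-V_0^\nu(0,\cdot)\|_{H^s}\le\nu^N$.

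\emph{Step 3: conclusion, and the main obstacle.} Let $T^\nu$ be the first time at which the leading amplitude $\delta\,e^{\Re\int_0^t\lambda^\nu(s)\,ds}$ reaches a fixed small threshold $\theta_0\nu^{1/4}$; since $\Re\lambda^\nu$ is bounded below, $T^\nu\sim(N'-\tfrac14)(\log\nu^{-1})/\gamma_0\sim C_0\log\nu^{-1}$. On $[0,T^\nu]$ the $k=1$ term dominates the sum (the correctors being smaller by further powers of $\delta\,e^{\Re\lambda^\nu t}$, the losses notwithstanding), so at $t=T^\nu$ one gets $\|V^\nu(T^\nu,\cdot,\cdot)-V_0^\nu(T^\nu,\cdot)\|_{L^\infty}\ge\sigma\nu^{1/4}$, with the velocity displaying precisely the scales $\nu^{1/2}$ and $\nu^{5/8}$ carried by $\phi^\nu(T^\nu)$. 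The hard part is exactly the estimate in Step~2: one must propagate the control of the correctors, and of the WKB drift error, over an interval whose length grows like $\log\nu^{-1}$, against the polynomial-in-$\nu$ losses the Orr--Sommerfeld resolvent suffers (the background carries $\sqrt\nu$-scale coefficients) and the amplification $e^{k\Re\lambda^\nu t}$ of the $k$-th corrector; it is precisely to beat these that the \emph{optimal} linear bounds of Sections~3--4 are needed, so that $V^\nu_{\mathrm{app}}$ stays dominated by its leading mode strictly up to the $\nu^{1/4}$ threshold and $F^\nu$ stays below $\nu^N$. A second, genuinely nonautonomous, difficulty is to rule out that heat-diffusion of $U_s$ quenches the instability before $T^\nu$: one needs $\Re\lambda^\nu(t)$ bounded below on the whole window, i.e.\ a quantitative, time-uniform strengthening of Step~1.
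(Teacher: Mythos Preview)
Your overall strategy --- iterate nonlinear correctors on top of the Orr--Sommerfeld eigenmode and declare the residual to be the forcing --- is the paper's, but the spectral input is wrong in a way that breaks the argument. The key error is the claim that the growth rate $\gamma^\nu$ can be taken \emph{bounded below uniformly in $\nu$}. That is exactly what separates Theorem~\ref{theomain} from Theorem~\ref{theofirst2}: when Rayleigh has no unstable mode, the Orr--Sommerfeld instability is a Tollmien--Schlichting wave whose growth rate \emph{vanishes} with the viscosity. After the boundary-layer rescaling (effective viscosity $\nu_1=\sqrt\nu$) one has $\alpha_1,c_1\sim\nu_1^{1/4}$ and $\alpha_1\Im c_1\sim\nu_1^{1/2}=\nu^{1/4}$, cf.\ (\ref{disper3})--(\ref{dispersionlimit}). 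This weak growth is precisely why the perturbation only reaches $O(\nu^{1/4})$ and not $O(1)$: the correctors $V_k^\nu$ carry polynomial-in-$\nu_1^{-1}$ losses from the critical-layer structure (the mode's vorticity is of order $\nu_1^{-1/4}$, see (\ref{check-use-number})), and the balance of those losses against the $\nu_1^{1/2}$ growth fixes the threshold at $\theta\sim\nu_1^{1/2}=\nu^{1/4}$. Your explanation --- that $\nu^{1/4}$ is the size of the normal velocity of a unit-normalised mode --- cannot be right, since the $L^\infty$ norm in the conclusion is dominated by the tangential component; with an $O(1)$ growth rate your construction would drive it to $O(1)$.

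Two secondary corrections follow from the correct scalings. The wavenumber is $\alpha_1\sim\nu_1^{1/4}=\nu^{1/8}$ in rescaled variables, i.e.\ $\alpha^\nu\sim\nu^{-3/8}$ in the original ones, not $\nu^{-1/4}$. And the $\nu^{5/8}$ layer is not a Stokes sublayer of width $\sqrt{\nu/\alpha^\nu}$; it is the \emph{critical layer} at $y_c$ where $U_s(y_c)=c^\nu$, so that $y_c\sim\nu^{1/2}c_1/U_s'(0)\sim\nu^{5/8}$, with thickness set by the Airy scale $\gamma^{-1}=(\nu/(\alpha U_s'(y_c)))^{1/3}\sim\nu^{5/8}$ as well.
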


These Theorems are discussed in sections $5$, $6$ and $7$. For the proofs, we refer to \cite{GGN3} and \cite{Bian4}.


\section{Fourier Laplace transform}



\subsection{Warm up}


Let us first recall Laplace's method.
Let $A$ be a matrix, and let $\phi(t)$ be the solution of
$$
{d \phi \over dt} (t) = A \phi(t),
$$
with initial data $\phi(0) = \phi_0$. 
Then we have the following representation formula
\beq \label{representation}
\phi(t) = e^{t A} \phi_0 = {1 \over 2 i \pi}  \int_\Gamma  e^{\lambda t} (A - \lambda )^{-1} \phi_0 \, d\lambda ,
\eeq
where $\Gamma$ is an integration contour "on the right" of the spectrum of $A$.
Using analyticity we can then play with this contour, and shift it to the left till it reaches the point spectrum
of $A$. We can further shift it to the left, going through the point spectrum of $A$, provided we add the residues at these singularities.
According to Cauchy's formula, this leads to
$$
\phi(t) = \sum_{\lambda \in Sp(A)} P_\lambda(\phi_0) e^{\lambda t} 
+  {1 \over 2 i \pi} \int_\Gamma  ( A - \lambda  )^{-1} \phi_0 \, d\lambda
$$
where $Sp(A)$ is the spectrum of $A$, 
$P_\lambda$ is the projection on the  eigenspace corresponding to the eigenvalue $\lambda$ (assuming that the eigenvalue is simple
to simplify the presentation),
and where the contour $\Gamma$ is now on the left of the spectrum. 
Note that the corresponding integral is  now exponentially decaying, with an arbitrary fast
speed (and in fact vanishes in the case of matrices).

In these notes we will follow exactly the same strategy with $A$ replaced by linearised Euler or Navier Stokes operators.


\subsection{Orr Sommerfeld and Rayleigh equations}


In this section we introduce the classical Rayleigh and Orr Sommerfeld equations.
Let $L_{NS}$ be the linearised Navier Stokes operator near a shear layer profile $U_s(y)$, namely
\begin{equation} \label{linearNS}
L_{NS} v = \Bigl( (U_s(y),0)  \cdot \nabla \Bigr) v + \Bigl( v \cdot \nabla \Bigr) (U_s(y),0) - \nu \Delta v + \nabla q,
\end{equation}
with $\nabla  \cdot v = 0$ and Dirichlet boundary condition.
Let $v(t)$ be a solution of
$$
\partial_t v(t) + L_{NS}v(t) = 0 
$$
with initial data $v(0) = v_0$.
Then, as for (\ref{representation}), we have the representation formula
\beq \label{LaplaceNS}
v(t) = {1 \over 2 i \pi} \int_\Gamma e^{\lambda t} \, (L_{NS} - \lambda )^{-1} v_0 \, d\lambda,
\eeq
where $\Gamma$ is any contour on the right of the spectrum of $L_{NS}$.
We thus need to study the resolvent of $L_{NS}$, namely to study the equation
\begin{equation} \label{resolvant}
(L_{NS} - \lambda) u = v_0.
\end{equation}
We take advantage of the incompressibility condition and introduce the stream function $\psi(t,x,y)$ of $u(t,x,y)$ in such a way that
$$
u = \nabla^\perp \psi .
$$
We further take the Fourier transform $\psi_\alpha(t,y)$ of $\psi(t,x,y)$ in the horizontal variable $x$, with dual Fourier variable $\alpha \in \rit$.
Following the tradition \cite{Reid}, we  define $c$ by
$$
\lambda = - i \alpha c.
$$
This leads to write the horizontal Fourier transform $u_\alpha(t,y)$ of $u(t,x,y)$ under the form
$$
u_\alpha = \nabla^\perp \Bigl( e^{i \alpha x } \psi_\alpha(y) \Bigr)=e^{i \alpha x }(\partial_y \psi_\alpha, -i\alpha \psi_\alpha).
$$
Note that the horizontal Fourier transform $\omega_\alpha$ of the vorticity $\omega$ is simply given by
\beq \label{omegapsi}
\omega_\alpha = -  (\partial_y^2 - \alpha^2) \psi_\alpha .
\eeq
Taking the curl of (\ref{resolvant}) we then get
\begin{equation} \label{Orrmod}
OS_{\alpha,c,\nu}(\psi) :=  (U_s - c)  (\partial_y^2 - \alpha^2) \psi 
 - U_s''  \psi  
- { \nu \over i \alpha}   (\partial_y^2 - \alpha^2)^2 \psi =- {\omega_{0,\alpha} \over i \alpha},
\end{equation}
where 
$$
\omega_{0,\alpha} = (\nabla \times v_0)_\alpha
$$ 
denotes the horizontal Fourier transform of the curl of the initial data $v_0$.
The Dirichlet boundary condition
gives 
\begin{equation} \label{condOrr}
\psi_\alpha(0) = \partial_y \psi_\alpha(0) = 0.
\end{equation}
Moreover, $\psi_\alpha(y) \to 0$ as $y \to + \infty$.
We also introduce
\begin{equation} \label{epsilon}
\varepsilon = {\nu \over i \alpha} .
\end{equation}
As $\nu$ goes to $0$, the Orr Sommerfeld operator $OS_{\alpha,c,\nu}$ degenerates into the classical Rayleigh operator
\begin{equation} \label{Rayleigh}
Ray_{\alpha,c}(\psi) = (U_s - c)  (\partial_y^2 - \alpha^2) \psi 
 - U_s''  \psi
 \end{equation}
 which is a second order operator, together with the boundary condition 
 \begin{equation} \label{condRay}
 \psi(0) = 0.
 \end{equation}
 The next two sections are devoted to a detailed study of Rayleigh (section $3$) and Orr Sommerfeld (section $4$), which allow to obtain
 optimal bounds on the solutions to linearised Euler and Navier Stokes equations thanks to (\ref{LaplaceNS}).


 \section{Rayleigh equation}



\subsection{Link with linearised Euler equations}


Note that $Ray_{\alpha,c}^{-1}$ is the resolvent of the linearised Euler operator, in stream function formulation and after an horizontal Fourier transform.
Namely, let $L_E$ denote the linearised Euler operator
$$
L_E u = \Bigl( (U_s(y),0)  \cdot \nabla \Bigr) u +  \Bigl( u \cdot \nabla \Bigr) (U_s(y),0) + \nabla p,
$$
and let $u(t)$ be the solution of 
$$
\partial_t u = L_E u
$$
with initial data $u^0$. Let $\psi_\alpha^0$ be the Fourier transform of the stream function of  the initial data $u^0$,
and, with a slight abuse of notation, let us denote by $e^{L_E t} \psi_\alpha^0$ the stream function of $u(t)$.
Then, using $\lambda = - i \alpha c$, we have
\beq \label{resolEuler}
e^{L_E t} \psi_\alpha^0 = - {1 \over 2 i  \pi} \int_\Gamma e^{- i \alpha c t} Ray_{\alpha,c}^{-1} \, \omega_{0,\alpha}  \, \, dc
\eeq
where $\Gamma$ is a contour "above" the spectrum of $Ray_{\alpha,c}$
(as $\lambda = - i \alpha c$,  the contour is now "above" the spectrum, instead of "on the right").

We note that (\ref{resolEuler}) gives an explicit expression of the solution of linearised Euler equations, in terms of the inverse
of Rayleigh operator. As we will see, a detailed analysis of the Rayleigh operator allows to give optimal bounds on the behavior
of solutions of linearised Euler equations, and in particular to investigate the so called "inviscid damping". 

The first step is to study the spectrum of linearised Euler equations.
The spectrum $\sigma_{Euler}$ of $L_E$ is composed of a point spectrum $\sigma_P$ and of a continuous spectrum $\sigma_C$.
We note that $Ray_{\alpha,c}$ degenerates when $U_s(y) = c$, hence the continuous spectrum $\sigma_C$ is simply the range of $U_s$.
Now, as $U_s$ has an exponential behavior, for $c$ out of the range of $U_s$,
there exists only one solution $\psi_{\alpha,c}(y)$ of (\ref{Rayleigh})  such that
$\psi_{\alpha,c}(y) \sim e^{- | \alpha | y}$ as $y \to + \infty$. We thus have
$$
\sigma_{Euler} = \sigma_P \cup \sigma_C, \qquad
\sigma_P = \Bigl\{ c \quad | \quad \psi_{\alpha,c}(0) = 0 \Bigr\}, \qquad 
\sigma_C = Range(U_s).
$$
As $\psi_{\alpha,c}(0)$ is an holomorphic function of $c$ outside the range of $U_s$, the eigenvalues are isolated. 
It is even possible to prove that their number is finite.

Let us first recall Rayleigh's criterium:

\begin{prop}
If $U_s$ has no inflection point, then $\sigma_P \cap \{ \Im c > 0 \} = \emptyset$.
\end{prop}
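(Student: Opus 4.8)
The plan is to use the classical energy-type identity obtained by multiplying the Rayleigh equation by $\bar\psi$ and integrating, and then to extract a sign condition from the imaginary part. Suppose for contradiction that $c = c_r + i c_i$ with $c_i > 0$ is an eigenvalue, so there is a nontrivial $\psi = \psi_{\alpha,c}$ solving $Ray_{\alpha,c}(\psi) = 0$ with $\psi(0) = 0$ and $\psi(y) \to 0$ as $y \to +\infty$. Since $c_i > 0$, the profile $U_s(y) - c$ never vanishes, so we may divide by it: the equation becomes
\begin{equation}
(\partial_y^2 - \alpha^2)\psi - \frac{U_s''}{U_s - c}\,\psi = 0.
\end{equation}
First I would multiply this by $\bar\psi$, integrate over $y \in (0,\infty)$, and integrate by parts in the $\partial_y^2$ term; the boundary terms vanish because $\psi(0) = 0$ and $\psi$ decays at infinity. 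This gives
\begin{equation}
-\int_0^\infty \bigl( |\partial_y \psi|^2 + \alpha^2 |\psi|^2 \bigr)\, dy - \int_0^\infty \frac{U_s''}{U_s - c}\,|\psi|^2\, dy = 0.
\end{equation}

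The key step is then to take the imaginary part of this identity. The first integral is real, so it drops out, and using
\begin{equation}
\frac{1}{U_s - c} = \frac{U_s - c_r + i c_i}{|U_s - c|^2},
\end{equation}
the imaginary part of the second term yields
\begin{equation}
c_i \int_0^\infty \frac{U_s''}{|U_s - c|^2}\,|\psi|^2\, dy = 0.
\end{equation}
Since $c_i > 0$ by assumption, this forces $\int_0^\infty \frac{U_s''}{|U_s - c|^2}|\psi|^2\, dy = 0$. If $U_s$ has no inflection point, then $U_s''$ has a fixed sign on $(0,\infty)$, while $|U_s-c|^{-2}|\psi|^2 \ge 0$ is not identically zero (as $\psi \not\equiv 0$); hence the integrand has a fixed strict sign on the support of $\psi$ and the integral cannot vanish. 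This contradiction shows $\sigma_P \cap \{\Im c > 0\} = \emptyset$.

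The only real subtlety — and the step I would be most careful about — is the justification of the integrations by parts and the convergence of the integrals near $y \to +\infty$: one needs that $\psi$ and $\partial_y \psi$ decay fast enough (which follows from the asymptotic $\psi_{\alpha,c}(y) \sim e^{-|\alpha| y}$ noted above, together with elliptic regularity giving matching decay for $\partial_y\psi$) and that $U_s''$ decays, so that $\frac{U_s''}{U_s-c}|\psi|^2$ is integrable. Near $y = 0$ there is no issue since $c_i > 0$ keeps the denominator bounded away from zero. One should also record that the case $\alpha = 0$ is trivial or excluded, since the eigenvalue relation $\lambda = -i\alpha c$ is the relevant one. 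With these routine decay facts in hand, the argument above is complete.
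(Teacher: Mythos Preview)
Your proof is correct and follows essentially the same approach as the paper: multiply the Rayleigh equation by $\bar\psi$, integrate by parts, and take the imaginary part to obtain $\Im c \int U_s''\,|\psi|^2/|U_s-c|^2\,dy = 0$, forcing $U_s''$ to change sign. Your additional remarks on decay and the legitimacy of the integration by parts are sensible elaborations that the paper omits.
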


\begin{proof}
Let $\psi$ and $c$ be an eigenvector and eigenvalue of Rayleigh operator. Then
$$
(\partial_y^2 - \alpha^2) \psi = {U_s'' \over U_s - c} \psi .
$$
We multiply by $\bar \psi$, which gives
$$
- \int | \partial_y \psi |^2 + \alpha^2 | \psi |^2 \, dy = \int {U_s'' \over U_s - c} | \psi |^2 \, dy .
$$
We then take the imaginary part, which gives
$$
\Im c \int  U_s''   { | \psi |^2 \over | U_s - c |^2 } \, dy  = 0 .
$$
Thus $U_s''$ must change sign, hence $U_s$ must have an inflection point.
\end{proof}

In particular, concave profiles are spectrally stable (i.e., $\sigma_P \cap \{ \Im c > 0 \} = \emptyset$). Rayleigh's criterium has been
refined by Fjortoft (see \cite{Reid}). Up to now, there is no necessary and sufficient condition for a profile $U_s$ to be spectrally stable.
In particular some profiles with inflection points are stable.


\subsection{Construction of two particular solutions of Rayleigh equation}


For the simplicity of the presentation we restrict ourselves to profiles $U_s$ which are concave, with $U_s'(y) > 0$.
If $c$ is away from the range of $U_s$, then $U_s(y) - c$ does not vanish. Rayleigh equation is then a regular ordinary differential equation.
We thus just focus on the case where $c$ is close to the range of $U_s$, where Rayleigh equation degenerates.
For $c$ close enough to $Range(U_s)$, there exists some complex number $y_c$ such that 
$$
U_s(y_c) = c.
$$
As $U_s(y)$ is monotonic in $y$, for a given real $c$, there exists at most one such $y_c$, which simplifies the analysis.
Such a complex number $y_c$ is called a "critical layer". We note that Rayleigh operator $Ray_{\alpha,c}$ degenerates at this critical layer.
A first solution of (\ref{Rayleigh}) is obtained by looking for an holomorphic solution which vanishes at $y_c$.
Looking for this first solution $\psi_{\alpha,c}^r(y)$ under the form
$$
\psi_{\alpha,c}^r(y) = \sum_{n \ge 1} \beta_n(\alpha,c) (y - y_c)^n,
$$
it is easy to get a recurrence relation on the coefficients $\beta_n(\alpha,c)$, depending on the Taylor expansion of $U_s$ near $y_c$, and to prove that
the corresponding series has a positive radius of convergence. Then $\psi_{\alpha,c}^r$ may be extended to a "regular" solution
on the whole real line.

An independent solution is obtained through the method of the variation of the coefficients, and is explicitly given by
$$
\psi_{\alpha,c}^s(y) = \psi_{\alpha,c}^r(y) \int_{y^\star}^y {dz \over \psi_{\alpha,c}^r(z)^2} ,
$$
where $y^\star$ can be chosen arbitrarily.
Expanding $\psi_{\alpha,c}^r$ near $y_c$ in this expression, we observe that, near $y_c$, this second solution behaves like
$$
\psi_{\alpha,c}^s(y) = P_{\alpha,c}^s(y) + Q_{\alpha,c}^s(y) \, (y - y_c) \, \log(y - y_c)
$$
where $P_{\alpha,c}^s$ and $Q_{\alpha,c}^s$ are holomorphic functions near $y_c$.
It is important to note that while $\psi_{\alpha,c}^r$ is smooth on the real line, $\psi_{\alpha,c}^s$ is non holomorphic at $y_c$, with a
"$(y - y_c) \log(y - y_c)$" singularity. This is a "small singularity" for the stream function, however the corresponding vorticity 
behaves like
$$
\omega_{\alpha,c}^s(y) \sim {Q_{\alpha,c}^s(y_c) \over y - y_c} 
$$
and is thus singular at $y_c$. Of course all the solutions of $Ray_{\alpha,c} \psi = 0$ are a linear combination of $\psi_{\alpha,c}^r$
and $\psi_{\alpha,c}^s$.
Physically, at $y_c$, there is a "resonance" between the speed of the flow $U_s(y_c)$ and the wave, leading to this singularity on the vorticity.

Now, using linear combinations of $\psi_{\alpha,c}^r$ and $\psi_{\alpha,c}^s$, it is possible to construct $\psi_{\pm,\alpha}(y,c)$
such that
$$
\psi_{\pm, \alpha}(y,c) \sim e^{\pm | \alpha | y} 
$$
as $y \to + \infty$, with unit Wronskian. We note that both $\psi_{\pm,\alpha}(y,c)$ may be singular at $y_c$ and are of the form 
\beq \label{exppsi}
\psi_{\pm,\alpha}(y,c) = P_{\pm,\alpha}(y - y_c,c) + (y - y_c) \log(y - y_c) Q_{\pm,\alpha}(y - y_c,c) 
\eeq
near $y_c$,
where $P_{\pm,\alpha}$ and $Q_{\pm,\alpha}$ are holomorphic. Of course $\psi_{-,\alpha}(y,c)$ goes to $0$ as $y \to + \infty$,
while $\psi_{+,\alpha}(y,c)$ diverges at infinity. We note that $\psi_{\pm,\alpha}(y,c)$ have "logarithmic branches" at
$y = y_c$.


\subsection{Green function for Rayleigh equation}


Using $\psi_{\pm,\alpha}(y,c)$, it is possible to explicitly construct the Green function of $Ray_{\alpha,c}$, defined by
\beq \label{defiG0c}
Ray_{\alpha,c} \Bigl( G_{\alpha,c}(x,y) \Bigr) = \delta_x
\eeq
and by $G_{\alpha,c}(x,0) = 0$ and $G_{\alpha,c}(x,y) \to 0$ as $y \to + \infty$. 
For this we first solve (\ref{defiG0c}) without the boundary conditions, just choosing a particular solution.
 This leads to the introduction of
 $$
 G_{\alpha,c}^{int}(x,y) = {1 \over U_s(x) - c} 
 \Bigl\{ \begin{array}{c}
 \psi_{-,\alpha}(y,c) \psi_{+,\alpha}(x,c) \qquad \hbox{if} \qquad y > x \cr
 \psi_{-,\alpha}(x,c) \psi_{+,\alpha}(y,c) \qquad \hbox{if} \qquad y < x , \cr
\end{array} 
 $$
 keeping in mind that the Wronskian between $\psi_{+,\alpha}$ and $\psi_{-,\alpha}$ is one.
 We then correct $G_{\alpha,c}^{int}$ by $G_{\alpha,c}^b$ such that 
 $$
 G_{\alpha,c} = G_{\alpha,c}^{int} + G_{\alpha,c}^b
 $$ 
 satisfies both Rayleigh's equation and the two boundary conditions. This leads to 
$$
 G_{\alpha,c}^b(x,y) = - \psi_{+,\alpha}(0,c) {\psi_{-,\alpha}(y,c) \over \psi_{-,\alpha}(0,c) }
 {\psi_{-,\alpha}(x,c) \over U_s(x) - c} 
$$
We note that $G_{\alpha,c}$ is singular at $c = U_s(y)$ and $c = U_s(x)$, with poles and "logarithmic branches"
at this points.

The solution of Rayleigh's equation
\beq \label{Ray0cf}
 (U_s - c)  (\partial_y^2 - \alpha^2 ) \psi - U_s''  \psi  = f,
 \eeq
 together with its two boundary conditions, is then explicitly given by
 \beq \label{constructionpsi}
 \psi(y) = \int_0^{+\infty} G_{\alpha,c}(x,y) f(x) \, dx .
 \eeq


\subsection{Inviscid damping}


We now combine (\ref{resolEuler}) and (\ref{constructionpsi}) to study the asymptotic behavior of solutions to Euler equations.
Bounds on solutions of linearised Euler equations have been pioneered in physics by F. Bouchet and H. Morita in \cite{Bouchet},
and later rigorously proved in \cite{Bedrossian-Zelati-Vicol-2019,Ionescu-Jia-CMP-2020,Ionescu-Iyer-Jia,Wei-Zhang-Zhao-CPAM-2018,
Wei-Zhang-Zhao-APDE-2019,Wei}. The main feature is that solutions of linearised Euler equations decay like $t^{-1}$ if
there is no unstable eigenvalue, a phenomena known as "inviscid damping".

In this section, following the lines of \cite{Bian2}, we prove the inviscid damping in a simplified case by combining  
(\ref{resolEuler}) and (\ref{constructionpsi}).
To simplify the presentation, we assume that $U_s(y)$ is holomorphic in $y$, satisfies $U_s(0) = 0$, and is monotonically increasing in $y$,
concave, with $U_s'(y) > 0$.

\begin{theorem} \label{growthRayleigh}
Assume that $U_s(y)$ is holomorphic in $y$, satisfies $U_s(0) = 0$, and is monotonically increasing and concave in $y$.
Let $\langle t \rangle = 1 + t$. Assume that the initial data $\omega_0(x)$ is a smooth function, then
\beq \label{result2}
| \alpha | \| \psi_\alpha(t,\cdot) \|_{L^\infty}  +  \| \partial_y \psi_\alpha(t,\cdot) \|_{L^\infty}   \lesssim \langle t \rangle^{-1}.
 \eeq
 Moreover there exists $\omega_\infty(t,y)$ such that
 \beq \label{result4}
 \omega_\alpha(t,y) = \omega_\infty(t,y) e^{- i \alpha U_s(y) t} + O(\langle t \rangle^{-1}).
 \eeq
\end{theorem}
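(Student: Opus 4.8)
The plan is to start from the Laplace representation formula (\ref{resolEuler}) for the stream function, insert the Green-function formula (\ref{constructionpsi}) for $Ray_{\alpha,c}^{-1}\omega_{0,\alpha}$, and then deform the contour $\Gamma$ down onto the continuous spectrum, which is the real interval $Range(U_s)$ (there is no unstable point spectrum by Rayleigh's criterium, since $U_s$ is concave). This collapses the contour integral in $c$ into an integral over $c\in Range(U_s)$ of the jump of $Ray_{\alpha,c}^{-1}\omega_{0,\alpha}$ across the real axis, weighted by $e^{-i\alpha c t}$. The key point is that this jump is governed entirely by the $(y-y_c)\log(y-y_c)$ singularity of $\psi_{\pm,\alpha}$ described in (\ref{exppsi}): as $c$ crosses the value $U_s(y)$, the critical layer $y_c$ crosses the contour of integration in $x$ in (\ref{constructionpsi}), and the logarithm picks up a jump of $2\pi i$. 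So the whole problem reduces to controlling an oscillatory integral $\int e^{-i\alpha c t} g_\alpha(c,y)\,dc$ where $g_\alpha$ is, up to smooth factors, the jump data built from $\psi_{\pm,\alpha}$, $U_s''$, and the smooth coefficients $P_{\pm,\alpha}, Q_{\pm,\alpha}$.

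Next I would change variables from $c$ to $y_c$ via $c=U_s(y_c)$ (legitimate since $U_s$ is monotone, so $y\mapsto U_s(y)$ is a diffeomorphism onto its range), turning the phase into $-i\alpha U_s(y_c)t$ and the amplitude into a function that is smooth in $y_c$ except for an integrable $\log$-type singularity at $y_c=y$. Then the $t^{-1}$ decay of $\psi_\alpha$ and $\partial_y\psi_\alpha$ in (\ref{result2}) follows from a single integration by parts in $y_c$: the boundary terms vanish because $\psi_{\pm,\alpha}$ decays at $y_c=+\infty$ and the Dirichlet conditions kill the contribution at the endpoint $y_c=0$, while the bulk term is $\frac{1}{i\alpha t}\int e^{-i\alpha U_s(y_c)t}\,\partial_{y_c}\!\big(\text{amplitude}/U_s'(y_c)\big)\,dy_c$, which is $O(t^{-1})$ uniformly provided $\partial_{y_c}$ of the amplitude is still (log-)integrable — this is where the holomorphy of $U_s$ and the smoothness of $\omega_0$ enter, guaranteeing that $P_{\pm,\alpha}, Q_{\pm,\alpha}$ and their $y_c$-derivatives are controlled. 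For the vorticity asymptotics (\ref{result4}), I would not integrate by parts; instead I would isolate, inside $\omega_\alpha(t,y)=-(\partial_y^2-\alpha^2)\psi_\alpha(t,y)$, the most singular contribution coming from differentiating the Green function twice at $x=y$, namely the term proportional to $\frac{1}{U_s(y)-c}$ evaluated near its pole. Stationary/endpoint analysis of that term produces precisely the non-decaying piece $\omega_\infty(t,y)e^{-i\alpha U_s(y)t}$ — the factor $e^{-i\alpha U_s(y)t}$ being the value of the phase $e^{-i\alpha c t}$ frozen at the resonant value $c=U_s(y)$ — and the remainder, coming from the smoother parts of $G_{\alpha,c}$, is $O(t^{-1})$ by the same integration-by-parts argument as before. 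One should also check that $\omega_\infty$ is well-defined, i.e. that the residue/jump at the critical layer assembles into a genuine function of $(t,y)$; it will depend mildly on $t$ only through lower-order corrections, consistent with the statement.

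The main obstacle is the uniform control of the critical-layer contribution: near $y_c=y$ the amplitude $g_\alpha$ has a $\log(y-y_c)$ (and, after one $y$-derivative on $\psi_\alpha$, a $\log$ plus bounded, and after the second $y$-derivative a genuine $\frac{1}{y-y_c}$) singularity, so when I integrate by parts in $y_c$ I must be sure the differentiated amplitude is still absolutely integrable and that no spurious boundary term is created at $y_c=y$ — this requires splitting the integral near $y_c=y$, handling the principal-value/residue structure there by hand, and only integrating by parts away from the critical point. A secondary technical point is tracking the dependence on $\alpha$: all estimates must be uniform for $\alpha$ in compact sets away from $0$ (the factor $1/(i\alpha)$ in $\varepsilon$ and in the right-hand side of (\ref{Orrmod}) is harmless here since we are at $\nu=0$, but the phase $e^{-i\alpha c t}$ and the $\frac{1}{i\alpha t}$ gain from integration by parts interact), which is why the bound (\ref{result2}) is stated with the weight $|\alpha|$ on $\psi_\alpha$. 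Once these two points are dispatched, assembling (\ref{result2}) and (\ref{result4}) is a matter of bookkeeping.
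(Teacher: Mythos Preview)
Your overall strategy is viable but takes a different route from the paper's, and there are two imprecisions worth correcting.

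The paper does not collapse $\Gamma$ onto the real axis and work with the jump. Instead it attacks the \emph{vorticity} first, using the algebraic identity $\omega_{\alpha,c}=\bigl(i\alpha U_s''\psi_{\alpha,c}+\omega_{\alpha,0}\bigr)/\bigl(\alpha(U_s-c)\bigr)$ read directly off Rayleigh's equation, which exhibits the pole at $c=U_s(y)$ explicitly. Pushing $\Gamma$ into $\{\Im c<0\}$ (this is where holomorphy of $U_s$ and of $\omega_{\alpha,0}$ is actually used) yields the residue $\omega_{\alpha,0}(y)e^{-i\alpha U_s(y)t}$ for the $\omega_{\alpha,0}$--piece with an exponentially small remainder. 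For the $U_s''\psi_{\alpha,c}$--piece one inserts the Green function; the integrand in $c$ then has both poles and logarithmic branch points at $c=U_s(x)$ and $c=U_s(y)$, and the paper deforms $\Gamma$ around each branch cut via two nearby vertical segments, the contribution of which is $O(t^{-1})$. Only afterwards does the paper recover $\psi_\alpha$ from $\omega_\alpha$ via the explicit kernel $H_\alpha$ of $(\partial_y^2-\alpha^2)^{-1}$, and the bound (\ref{result2}) then comes from non-stationary phase in the \emph{spatial} variable applied to $\int H_\alpha(x,y)\zeta_\alpha(y)e^{-i\alpha U_s(y)t}\,dy$. Your order (stream function first, jump on the real axis, integration by parts in $y_c$) is closer to the limiting-absorption arguments in the Wei--Zhang--Zhao line and would also go through, but the paper's route makes the leading term in (\ref{result4}) appear as a clean residue rather than via a delicate extraction of the singular part of $\partial_y^2 G_{\alpha,c}$, and keeping $\Gamma$ in the complex plane sidesteps the principal-value surgery near $y_c=y$ that you correctly flag as the main obstacle.

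Two points to fix. First, the jump across $\Im c=0$ is not governed \emph{entirely} by the $(y-y_c)\log(y-y_c)$ term: the factor $1/(U_s(x)-c)$ in $G_{\alpha,c}^{int}$ is a genuine pole and, after the $x$-integration, contributes a Plemelj-type jump on equal footing with the logarithm; the paper treats poles and log branches separately, and you should too. Second, the Dirichlet condition $\psi_{\alpha,c}(y=0)=0$ is a condition at $y=0$, not at $y_c=0$, so it does not kill the boundary term of your integration by parts at the spectral edge $c=0$; that endpoint has to be handled by analyzing the resolvent near the bottom of the continuous spectrum.
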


Note that, as $U_s$ is monotonically increasing $\sigma_P \cap \{ \Im c > 0 \} = \emptyset$. Hence there is no unstable mode.
The first step is to move $\Gamma$ close to $\sigma_C$ in (\ref{resolEuler}) and to choose for instance
$$
\Gamma = \{ (1 + i) \rit_- + m + i \eps\}  \cup \{ [m, M] + i \eps \}  \cup \{ (1 - i) \rit_+ + M + i \eps \} 
$$
for some small $\eps$.
However, with this choice of $\Gamma$ we only get an exponential bound, of the form $e^{\eps t}$.
To get the $\langle t \rangle^{-1}$ decay, we need to go "through" $\sigma_C$, 
and hence to study how Rayleigh operator behaves close to $Range(U_s)$.
Let 
$$
\psi_{\alpha,c} = - (i\alpha)^{-1} \,  Ray_{\alpha,c}^{-1} \, \, \omega_{0,\alpha}.
$$
Then $\psi_{\alpha,c}$ is explicitly given using  formula (\ref{constructionpsi}), and we have
$$
e^{L_E t} \psi_\alpha^0 = {\alpha \over 2 \pi} \int_\Gamma e^{- i \alpha c t} \psi_{\alpha,c}(y) \, dc .
$$
We further observe that the corresponding vorticity 
$$
\omega_{\alpha,c} = - (\partial_y^2 - \alpha^2) \psi_{\alpha,c}
$$ 
is directly given by Rayleigh's equation
\beq \label{defiomega}
\omega_{\alpha,c} = {i  \alpha U_s''  \psi_{\alpha,c} + \omega_{\alpha,0} \over \alpha (U_s - c) }.
\eeq
Thus the horizontal Fourier transform $\omega_\alpha(t)$ of the solution $u(t)$ is given by
\beq \label{omea}
\omega_\alpha(t,y) = {\alpha \over 2  \pi } \int_{\Gamma} e^{- i \alpha c t}  {\zeta_\alpha(y,c) \over  U_s(y) - c } \; dc 
\eeq
where
\beq \label{zeta}
\zeta_{\alpha,c} = i \alpha U_s'' \psi_{\alpha,c}  +  \omega_{\alpha,0}.
\eeq
We will first discuss the following result

\begin{proposition}
Under the assumptions of Theorem \ref{growthRayleigh}, as $t \to + \infty$,
\beq \label{vorticityasymp}
\omega_\alpha(t,y) = 
\zeta_\alpha(y) e^{- i \alpha U_s(y) t} + O(t^{-1}).
\eeq
\end{proposition}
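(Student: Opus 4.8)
The plan is to start from the representation formula \eqref{omea} and analyze the contour integral by deforming $\Gamma$ onto (and through) the continuous spectrum $\sigma_C = \mathrm{Range}(U_s)$, using the Green function description of $Ray_{\alpha,c}^{-1}$ from Section~3.3. First I would use analyticity of $c \mapsto \psi_{\alpha,c}$ off the range of $U_s$ (which follows from the explicit formula \eqref{constructionpsi} together with the analytic dependence of $\psi_{\pm,\alpha}(\cdot,c)$ on $c$) to push the horizontal pieces of $\Gamma$ far away, where $e^{-i\alpha c t}$ decays, so that the only contribution comes from a small neighborhood of $\sigma_C$. On this segment, the integrand in \eqref{omea} has the factor $1/(U_s(y) - c)$, which for fixed real $y$ is a simple pole at $c = U_s(y)$; the residue at this pole is precisely $\zeta_\alpha(y) e^{-i\alpha U_s(y) t}$, which will be the leading term in \eqref{vorticityasymp}.

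The remaining task is then to estimate the "principal-value" part, i.e. the integral over the contour with the pole at $c = U_s(y)$ removed (or equivalently, after collapsing $\Gamma$ onto the real $c$-axis from above and below, the jump of $\psi_{\alpha,c}$ across $\sigma_C$ integrated against $e^{-i\alpha c t}$). Here I would change variables $c = U_s(z)$ for real $z$ (legitimate since $U_s$ is monotone), turning the integral into $\int e^{-i\alpha U_s(z) t} K_\alpha(y,z)\, dz$ for a kernel $K_\alpha$ built out of $\zeta_{\alpha,U_s(z)}$, the Jacobian $U_s'(z)$, and the factor $1/(U_s(y)-U_s(z))$. The key structural point, coming from the analysis of Section~3.2, is that $\psi_{\alpha,c}$ — hence $\zeta_{\alpha,c}$ — has only a mild $(z-z_c)\log(z-z_c)$-type singularity in the critical-layer variable, so $K_\alpha(y,\cdot)$ is Lipschitz (indeed $C^{0,\gamma}$ for any $\gamma<1$, with an integrable log) away from $z = y$ and has at worst a logarithmic/PV singularity at $z = y$ that is handled by the residue extraction above. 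Then a non-stationary-phase / integration-by-parts argument in $z$ (using that $U_s'(z) > 0$ so the phase $U_s(z)$ has no critical point) gives the $O(t^{-1})$ bound: one integration by parts produces $1/(i\alpha t)$ times the derivative in $z$ of $K_\alpha(y,z)/U_s'(z)$, which is still integrable in $z$ thanks to the only-logarithmic loss of regularity, uniformly in $y$.

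I expect the main obstacle to be the uniform control of the kernel $K_\alpha(y,z)$ and its first $z$-derivative near the diagonal $z = y$ and near the critical layer, i.e. making precise that the Green function $G_{\alpha,c}$ and the associated $\psi_{\alpha,c}$ depend on $c$ in a way that is regular enough for one clean integration by parts but no more — this is exactly where the $(y-y_c)\log(y-y_c)$ branch structure from \eqref{exppsi} enters, and where one must be careful about the $1/(U_s(x)-c)$ and $1/(U_s(y)-c)$ factors in $G_{\alpha,c}$ colliding. A secondary point is to identify $\omega_\infty$, i.e. $\zeta_\alpha(y)$, as the boundary value of $\zeta_{\alpha,c}$ as $c \to U_s(y)$ from the appropriate side, and to check this limit exists (again using the log-regularity), together with showing the difference between the two one-sided boundary values contributes only to the error. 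Once the kernel estimates are in place, the decay follows by the standard oscillatory-integral argument, and \eqref{vorticityasymp} is obtained; the bounds \eqref{result2} and \eqref{result4} of Theorem~\ref{growthRayleigh} then follow by similar but more refined bookkeeping (splitting off the $e^{-i\alpha U_s(y)t}$ factor and estimating $\psi_\alpha$ via \eqref{omegapsi} and elliptic regularity for $\partial_y^2 - \alpha^2$).
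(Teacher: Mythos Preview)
Your approach is correct in outline but takes a genuinely different route from the paper. Both of you begin by isolating the simple pole at $c = U_s(y)$ in \eqref{omea}, whose residue supplies the leading term $\zeta_\alpha(y)e^{-i\alpha U_s(y)t}$. For the remainder, however, the paper stays entirely in the complex $c$-plane: it first splits $\zeta_{\alpha,c}$ via \eqref{zeta} into the holomorphic piece $\omega_{\alpha,0}$ (for which $\Gamma$ can be pushed into $\Im c < 0$, giving exponential decay) and the piece $i\alpha U_s''\psi_{\alpha,c}$, into which it inserts the explicit Green function \eqref{constructionpsi}. This produces, for each fixed $x,y$, integrands of the form $e^{-i\alpha ct}R(c)\log(y - y_c)$ with $R$ holomorphic. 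The paper then deforms $\Gamma$ downward \emph{around} the vertical branch cut emanating from $c = U_s(y)$ (and similarly from $c = U_s(x)$): the $2\pi i$ jump of the logarithm across the cut, combined with the exponential decay of $e^{-i\alpha ct}$ along $\Im c \to -\infty$, gives $O(t^{-1})$ in one stroke.

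Your method instead collapses $\Gamma$ onto the real axis, changes variable $c = U_s(z)$, and runs a non-stationary-phase/integration-by-parts argument in $z$. This is a legitimate alternative and has the merit of not leaning as hard on the analyticity of $U_s$; it is closer in spirit to the real-variable proofs of inviscid damping in the literature. The price is exactly the kernel regularity you flag: after removing the pole, the remaining kernel has a $\log|z-y|$ singularity, so its $z$-derivative is of principal-value type $1/(z-y)$ rather than integrable, and your sentence ``which is still integrable'' needs to be replaced by a PV/Hilbert-transform bound (which does yield $O(t^{-1})$, but not by straightforward absolute integrability). The paper's contour argument sidesteps this entirely by turning the log branch into an exponentially damped line integral, which is why it is shorter.
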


\begin{proof}
We split $\zeta_\alpha$ according to (\ref{zeta}).
Assuming that $\omega_{\alpha,0}$ is holomorphic, we can move $\Gamma$ downwards to $\Im c < 0$,
and, thanks to Cauchy's residue theorem, the contribution of the pole at $c = U_s(y)$ gives
$$
 {\alpha \over 2   \pi } \int_\Gamma e^{- i \alpha c t} 
 {\omega_{\alpha,0}(y) \over  U_s(y) - c}\; dc 
 = e^{- i \alpha U_s(y) t} \omega_{\alpha,0}(y) + O(e^{- \eps t}) 
$$
for some positive $\eps$. The other term is
\beq \label{defiI}
I :=  {\alpha \over 2   \pi } \int_\Gamma e^{- i \alpha c t}  U_s''(y) {\psi_{\alpha}(y,c) \over  U_s(y) - c}\; dc .
\eeq
We then insert
$$
\psi_{\alpha}(y,c) = \int G_\alpha(x,y) \psi_\alpha^0(x) \, dx
$$
in (\ref{defiI}).
This leads to various terms, and amongst them
$$
\int_{x < y} \int_\Gamma U_s''(y) {\psi_{+,\alpha}(x,c) \over U_s(x) - c}  {\psi_{-, \alpha}(y,c) \over U_s(y) - c}  \, dc \, dx.
$$
Let us fix $x$ and $y$. We have to study
$$
I := \int_\Gamma  {\psi_{+,\alpha}(x,c) \over U_s(x) - c}  {\psi_{-, \alpha}(y,c) \over U_s(y) - c}  \, dc 
$$
We note that the integrand is singular when $c = U_s(y)$ or $c = U_s(x)$, with two types of singularities: poles and "logarithmic branches".
When we shift $\Gamma$ downwards, the poles give contributions through Cauchy's residue theorem.
The "logarithmic branch" lead to consider terms of the form
$$
J := \int_\Gamma R(c) \log( y - y_c) 
$$
where $R(c)$ is holomorphic near $y_c$.
We choose the determination of the logarithmic so that $c \to \log(y - y_c)$ is well defined except on a vertical half line.
To bound $J$ we shift $\Gamma$ downwards, making the "turn" of the singularity through two nearby vertical lines, which gives
a $t^{-1}$ contribution. The polynomial decay thus arises from the logarithmic branch created by Rayleigh's singularity.
\end{proof}

We now go back to Theorem \ref{growthRayleigh}.
To go from the vorticity to the stream function we need to invert the Laplace operator $\partial_y^2 - \alpha^2$.
Let $H_\alpha(x,y)$ be the Green function of $(\partial_y^2 - \alpha^2)$ with Dirichlet boundary condition.
We have
$$
H_{\alpha}(x,y) = {e^{- | \alpha | | x- y |} \over 2 | \alpha |} - {e^{- | \alpha | | x + y |} \over 2 | \alpha |} .
$$
Then
\beq \label{inverseomega}
\psi_\alpha(x) = \int_0^{+\infty} H_\alpha(x,y) \omega_{\alpha}(t,y) \, dx 
= \int_0^{+\infty} H_\alpha(x,y) \zeta_{\alpha}(y) e^{- i \alpha U_s(y) t} \, dx + O(t^{-1})
\eeq
and the end of the computations are straightforward.


\section{Orr Sommerfeld equation}


Let us now study the Orr Sommerfeld operator.  We refer to \cite{GGN3} for detailed proofs. We only qualitatively describe the
construction of solutions to Orr Sommerfeld equation in this section, without details and give hints on the long time behavior of solutions
to linearised Navier Stokes equations (see \cite{Bian4} for more details).


\subsection{Construction of four independent solutions}


\subsubsection*{"Slow" and "fast" solutions}

Let us first consider the Orr Sommerfeld operator alone, without any boundary condition.
It is a fourth order differential operator, therefore we have to construct four independent solutions to completely describe its solutions.
Let us first study their behavior at infinity.
At infinity $OS$ "degenerates" into
$$
(U_+  - c)  (\partial_y^2 - \alpha^2) \psi - \eps (\partial_y^2 - \alpha^2)^2 \psi = 0
$$
or equivalently 
$$
\Bigl[ (U_+ - c) - \eps (\partial_y^2 - \alpha^2) \Bigr] (\partial_y^2 - \alpha^2) \psi = 0,
$$
hence, as $y \to + \infty$, either $\psi \sim e^{\pm | \alpha | y}$ or
$\psi \sim e^{\pm \sigma y}$ where
$$
 \sigma= \sqrt{U_+ - c \over \eps} .
$$
In the sequel, $c$ will be small, hence $\sigma$ will be of order $| \eps^{-1/2} | \gg 1$.
It can thus be proven that there exists four independent solutions to this operator,
two with a "slow" behavior (like $e^{\pm | \alpha | y}$), called $\psi_{s,\pm}$, and two with a "fast" behavior
(like $e^{\pm \sigma y}$), called $\psi_{f,\pm}$.
The "-" subscript refers to solutions which go to $0$ at infinity and the "+" subscript to solutions diverging as $y$ goes
to $+ \infty$.


\subsubsection*{"Fast" solutions}


"Fast" solutions have very large derivatives. For these solutions, 
Orr Sommerfeld may be approximated by its higher order derivatives, namely by
\begin{equation} \label{modAiry}
(U_s - c) \partial_y^2 - \varepsilon \partial_z^4 = Airy  \, \partial_y^2,
\end{equation}
where $Airy$ is the modified Airy operator 
$$
Airy =  (U_s - c) - \varepsilon \partial_y^2.
$$
More precisely,
$$
OS = Airy  \,  \partial_y^2 + Rem,
$$
where it turns out that the remainder operator $Rem$ may be treated as a perturbative term.

The first step is thus to construct solutions of $Airy \, \, \partial_y^2 = 0$, or, up to two integrations in $y$, to construct
solutions of $Airy = 0$. Near $y_c$, the $Airy$ operator may further be approximated by the classical Airy operator
\begin{equation} \label{linAiry}
U_s'(y_c) (y - y_c) - \varepsilon \partial_y^2 .
\end{equation}
Solutions of (\ref{linAiry}) may be explicitly expressed as combinations of the classical Airy's functions
$Ai$ and $Bi$, which are solutions of the classical Airy equation 
$$
y \psi = \partial_y^2 \psi.
$$
More precisely, $Ai(\gamma (y - y_c))$ and $Bi(\gamma (y - y_c))$ are solutions of (\ref{linAiry}) provided
\begin{equation} \label{gamma}
\gamma = \Bigl( {i \alpha U_s'(y_c) \over \nu} \Bigr)^{1/3}.
\end{equation}
Now, solutions of $Airy = 0$ may be constructed, starting from solutions of (\ref{linAiry}), through the
so called Langer's transformation (which is a transformation of the phase and amplitude of the solution).
To go back to (\ref{modAiry}) we then have to integrate twice these solutions.
As a consequence, fast solutions to the Orr Sommerfeld equation may be expressed in terms of
second primitives of the classical Airy functions $Ai$ and $Bi$.
 Let us call $Ai(2,.)$ and $Bi(2,.)$ this second primitives of $Ai$ and $Bi$.
 The second step is to take into account the remainder term $Rem$, through an iterative scheme. We skip the technical details.
 
 The output of this construction is a complete description of $\psi_{\pm,f}$. In particular, at leading order,
 $$
 \psi_{-,f}(y) = Ai \Bigl( 2, \gamma (y - y_c) \Bigr) + O(\alpha).
 $$
Moreover, it can be proven that $\psi_{f,-}$ satisfies
 \begin{equation} \label{psif}
\psi_{f,-}(0) = Ai(2,-\gamma y_c) + O(\alpha)
\end{equation}
and
\begin{equation} \label{psifd}
\partial_y \psi_{f,-}(0) = \gamma Ai(1, - \gamma y_c) + O(1).
\end{equation}


\subsubsection*{"Slow" solutions}


For the "slow" solutions, as a first approximation, higher order derivatives may be neglected. We thus see
$OS$ as a perturbation of  Rayleigh's operator as $\nu$ goes to $0$, namely
$$
OS = Ray - \varepsilon Diff,
$$
where
$$
Diff(\psi) =   (\partial_z^2 - \alpha^2)^2 \psi .
$$
For "slow" solutions, $Diff$ may be treated as a small perturbation. This is indeed true for $\psi_{\alpha,c}^r$, which is smooth near $y_c$,
and we have
 $$
 OS_{\alpha,c,\nu}(\psi_{\alpha,c}^r) = O(\varepsilon).
 $$ 
 However, for $\psi_{\alpha,c}^s$ the situation is more delicate since $\eps \partial_y^4 \psi_{\alpha,c}^s$
behaves like $\eps (y - y_c)^{-3}$ near the critical layer. As a consequence, away from the critical layer, $\psi_{\alpha,c}^s$ is a "good" approximate
solution to $OS$. However, close to the critical layer, the viscous term $\eps Diff ( \psi_{\alpha,c}^s)$ can no longer be neglected.
A "boundary layer" at $y_c$, or more exactly an "inner layer", must be added at $y_c$ to "hide" the $\eps (y - y_c)^{-3}$ singularity.
We thus correct $\psi_{\alpha,c}^s$ by adding $\psi^c$, solution to
$$
Airy \, \partial_y^2 \psi^c = \varepsilon Diff(\psi_{\alpha,c}^s) .
$$
Note that $\psi^c$ may be explicitly computed using Airy functions. It turns out that $\psi_{\alpha,c}^s + \psi^c$ is then a "good enough" solution
to the Orr Sommerfeld equation.
It is then possible to construction two independent solutions to $OS$ (see \cite{GGN3}), and to combine them in order to have one which goes
to $0$ at infinity, called $\psi_{s,-}$.
Further computations \cite{Reid} lead to
\begin{equation} \label{psism}
\psi_{s,-}(0) = - c + \alpha {U_+^2 \over U_s'(0)} + O(\alpha^2)
\end{equation}
and
\begin{equation} \label{psisd}
\partial_y \psi_{s,-}(0) = U'_s(0) + O(\alpha).
\end{equation}


\subsection{Dispersion relation and related instability}

An eigenmode of Orr Sommerfeld equation (together with its boundary conditions)
 is a combination of these four particular solutions which goes to $0$ at infinity
and which vanishes, together with its first derivative, at $y  = 0$. As an eigenmode must go to $0$ as $y$ goes to 
infinity, it is a linear combination of $\psi_{f,-}$ and $\psi_{s,-}$ only. To match the boundary conditions at $y = 0$
there must exist nonzero constants $a$ and $b$ such that 
$$
a \psi_{f,-} (0) + b \psi_{s,-} (0) = 0
$$ and
$$
a \partial_y  \psi_{f,-} (0) + b \partial_y \psi_{s,-} (0) = 0.
$$
The dispersion relation is therefore
\begin{equation} \label{disper}
{   \psi_{f,-} (0) \over \partial_y  \psi_{f,-} (0)}
= {  \psi_{s,-} (0) \over \partial_y  \psi_{s,-} (0)}
\end{equation}
or, using the previous computations,
\begin{equation} \label{disper2}
\alpha {U_+^2 \over U_s'(0)^2}  - {c \over U_s'(0)} = \gamma^{-1} {Ai(2, - \gamma y_c) \over Ai(1,-\gamma  y_c)} 
+O(\alpha^2).
\end{equation}
To simplify the discussion we will focus on the particular case where $\alpha$ and $c$ are both of order $\nu^{1/4}$. It turns out 
that this is an area where instabilities occur, and we conjecture that it is in this region that the most
unstable instabilities may be found.
We rescale $\alpha$ and $c$ by $\nu^{1/4}$ and introduce
$$
\alpha = \alpha_0 \nu^{1/4}, \quad 
c = c_0 \nu^{1/4}, \quad
Z = \gamma y_c,
$$
which leads to
\begin{equation} \label{disper3}
\alpha_0 {U_+^2 \over U_s'(0)^2}  - {c_0 \over U_s'(0)} = 
 {1 \over (i \alpha_0 U'_s(y_c))^{1/3}}  {Ai(2, - Z) \over Ai(1,- Z)} + O(\nu^{1/4}).
\end{equation}
Note that as $U_s(y_c) = c$, 
$$
y_c = U_s'(0)^{-1} c + O(c)
$$
and
$$
Z =  \Bigl(i  U_s'(y_c) \Bigr)^{1/3} \alpha_0^{1/3} U_s'(0)^{-1} c_0 + O(\nu^{1/4}) .
$$
Note that the argument of $Z$ equals $\pi / 6$.
We then introduce the following function, called Tietjens function, of the real variable $z$
$$
Ti(z) = {Ai(2,z e^{- 5 i \pi / 6}) \over z e^{- 5 i \pi / 6} Ai(1,z e^{- 5 i \pi / 6})} .
$$
At first order the dispersion relation becomes
\begin{equation} \label{dispersionlimit}
\alpha_0 {U_+^2 \over U_s'(0)} = c_0 \Bigl[ 1 - Ti(- Z e^{5 i \pi / 6} ) \Bigr] .
\end{equation}
This dispersion relation can be numerically investigated. Note that it only depends on the limit $U_+$
of the horizontal velocity at infinity and on $U_s'(0)$. It can be proven that, provided $\alpha_0$ is large enough,
there exists $c_0$ with $\Im c_0 > 0$, leading to an unstable mode.

Let us numerically illustrate this dispersion relation in the case $U_s'(0) = U_+ = 1$.
Figure (a) shows the imaginary part of $c_0$ as a function of $\alpha_0$. We see that there exists 
a constant $\alpha_c$ such that $\Im c_0 > 0$ if $\alpha_0 > \alpha_c$ and $\Im c_0 < 0$ if
$\alpha_0 < \alpha_c$.
Figure (b) shows $\Re \lambda = \alpha_0 \Im c_0$ as a function of $\alpha_0$.
We see that there exists a unique global maximum to this function, at $\alpha_0 = \alpha_M \sim 2.8$.

\begin{figure}
\centerline{\includegraphics[width=4cm]{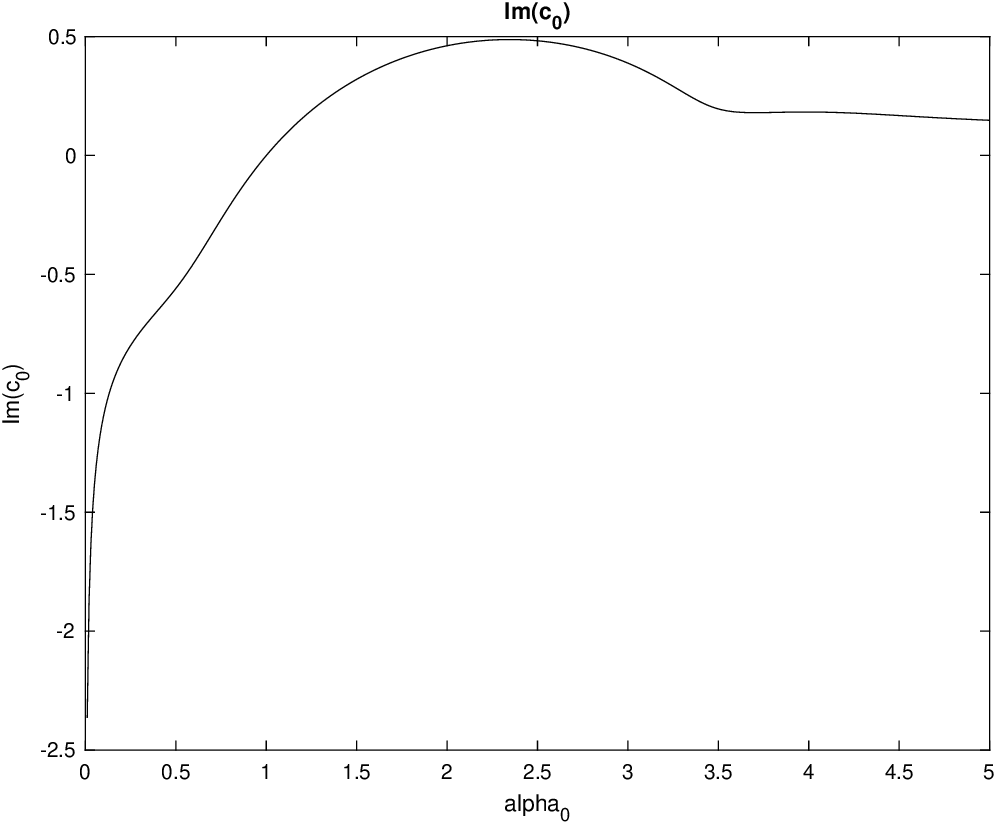}}
\caption{$\Im c_0$ as a function of $\alpha_0$ }
\label{valeurc}
\end{figure}

\begin{figure}
\centerline{\includegraphics[width=4cm]{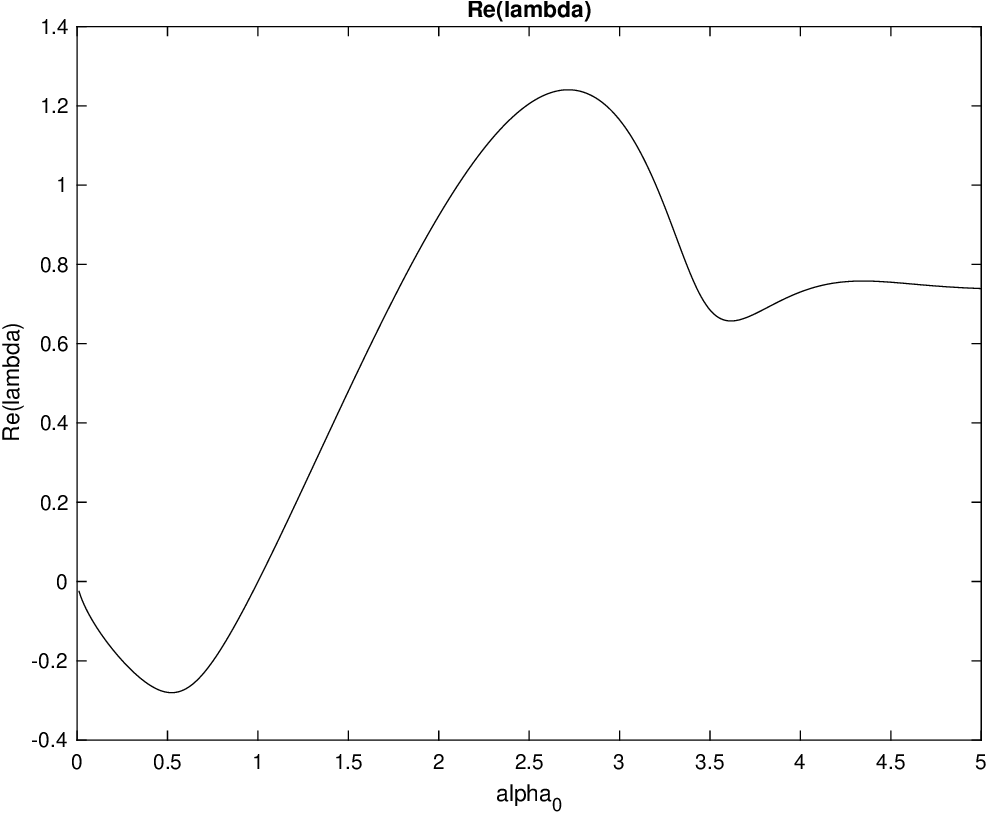}}
\caption{$\Re \lambda$ as a function of $\alpha_0$ }
\label{valeurlambda}
\end{figure}

Let us now detail the corresponding linear instability. Its stream function $\psi_{lin}$ is of the form
$$
\psi_{lin} = b \psi_{s,-} + a \psi_{f,-} .
$$
Choosing $b = 1$ we see that $a = O(\nu^{1/4})$, hence
\begin{equation} \label{psilin}
\psi_{lin}(y) = U_s(y) - c + \alpha {U_+^2 \over U_s'(0)} + a Ai \Bigl(2, \gamma (y - y_c) \Bigr) +  O(\nu^{1/2} ).
\end{equation}
The corresponding horizontal and vertical velocities $u_{lin}$ and $v_{lin}$ are given by
\begin{equation} \label{vlin}
u_{lin} = \partial_y \psi_{lin} = U_s'(y) +\gamma a Ai \Bigl(1, \gamma (y - y_c) \Bigr) + O(\nu^{1/4}),
\end{equation}
and 
\begin{equation} \label{ulin} 
v_{lin} = -i\alpha \psi_{lin} = O(\nu^{1/4}).
\end{equation}
Note that $\gamma a = O(1)$. The first term in (\ref{vlin}) may be seen as a "displacement velocity", corresponding
to a translation of $U_s$. The second term is of order $O(1)$ and located in the boundary layer, namely
within a distance $O(\nu^{1/4})$ to the boundary. Note that the vorticity
\begin{equation} \label{check-use-number}
\omega_{lin} = -(\partial_y^2 - \alpha^2) \psi_{lin}(y) 
= -U_s''(y) - \gamma^2 a Ai \Bigl( \gamma (y - y_c) \Bigr) 
\end{equation}
is large in the critical layer (of order $O(\nu^{-1/4})$).


\subsection{Green function for Orr Sommerfeld }


Let us now construction the Green function $G(x,y)$ for Orr Sommerfeld equation (see \cite{GN} for a detailed study). 
We search $G$ under the form
$$
G = G_i + G_b
$$
where $G_i$ takes care of the source term $\delta_x$ and where $G_b$ takes care of the boundary conditions.
We look for $G_i(x,y)$ of the form
$$
G_i(x,y) = a_+(x)  \phi_{s,+}(y) 
+ {b_+(x) }  {\phi_{f,+}(y) \over \phi_{f,+}(x)} 
\quad \hbox{for} \quad y < x,
$$
$$
G_i(x,y) = a_-(x)  \phi_{s,-}(y)
+ {b_-(x)} {\phi_{f,-}(y) \over \phi_{f,-}(x)} 
\quad \hbox{for} \quad y  > x,
$$
where $\phi_{f,\pm}(x)$ play the role of normalization constants. Let
$$
F_\pm = \phi_{f,\pm}(x) 
$$
and let 
$$
v(x) = (- a_-(x), a_+(x), - b_-(x), b_+(x) ) .
$$
By definition of a Green function, $G_i$, $\partial_y G_i$ and $\partial_y^2 G_i$ are continuous at $x = y$,
whereas $- \eps \partial_y^3 G_i$ has a unit jump at $x = y$. 
Let
\beq \label{matriceM}
M = \left( \begin{array}{cccc} 
\phi_{s,-} & \phi_{s,+} & \phi_{f,-} / F_-  & \phi_{f,+} /F_+  \cr
\partial_y \phi_{s,-}  & \partial_y\phi_{s,+}
& \partial_y\phi_{f,-} /  F_-  & \partial_y\phi_{f,+} / F_+  \cr
\partial_y^2 \phi_{s,-}  &\partial_y^2 \phi_{s,+}  
& \partial_y^2 \phi_{f,-} / F_-   & \partial_y^2 \phi_{f,+} /  F_+ \cr
 \partial_y^3 \phi_{s,-} &  \partial_y^3 \phi_{s,+}
& \partial_y^3 \phi_{f,-}  /  F_- &  \partial_y^3 \phi_{f,+} /  F_+ \cr 
\end{array} \right) ,
\eeq
where the various functions are evaluated at $x$. Then 
\beq \label{Mv}
M v = (0,0,0,-  \eps^{-1}) .
\eeq
Let $A$, $B$, $C$ and $D$ be the two by two matrices defined by
$$
M = \left( \begin{array}{cc} 
A & B \cr
C & D \cr \end{array} \right) .
$$
It can be proven (see \cite{GN}) that $D$ is invertible, $A$ is related to Rayleigh equation and that $C$ is small.
As a consequence,
$$
\widetilde M = \left( \begin{array}{cc} 
A^{-1} & - A^{-1} B D^{-1} \cr
0 & D^{-1} \cr \end{array} \right) 
$$
is an approximate inverse of $M$. By iteration, this allows to construct the true inverse of $M$ and to get bounds on it \cite{GN}.

We now add to $G_i$ another Green function $G_b$ to handle the boundary conditions.
We look for $G_b$ under the form
$$
G_b(y) = d_s \phi_{s,-}(y)  + d_f {\phi_{f,-}(y)  \over \phi_{f,-}(0)},
$$
where $\phi_{f,-}(0)$ in the denominator is a normalization constant, 
and look for $d_s$ and $d_f$ such that
\beq \label{Greenb1}
G_i(x,0) + G_b(0) = 
\partial_y G_i(x,0) + \partial_y G_b(0) =  0.
\eeq
Let
$$
N = \left( \begin{array}{cc} \phi_{s,-} & \phi_{f,-} / \phi_{f,-}(0) \cr
\partial_y \phi_{s,-} & \partial_y \phi_{f,-} / \phi_{f,-}(0) \cr \end{array} \right) ,
$$
the functions being evaluated at $y = 0$. Then (\ref{Greenb1}) can be rewritten as
$$
N d = - (G_i(x,0), \partial_y G_i(x,0)) 
$$
where $d = (d_s,d_f)$. 
We have
$$
N^{-1} = {1 \over \det(N)}  \left( \begin{array}{cc} \partial_y \phi_{f,-}(0) / \phi_{f,-}(0) & - 1 \cr
- \partial_y \phi_{s,-}(0)& \phi_{s,-}(0)  \end{array} \right) ,
$$
which ends the construction.


\subsection{Asymptotic behavior of linearised Navier Stokes}


As for Euler equation, using contour integral, we have
\beq
e^{L_{NS} t} \psi_0 = {\alpha \over 2  \pi} \int_\Gamma e^{i \alpha c t} Orr_{\alpha,c,\nu}^{-1} \psi_0 \, dc ,
\eeq
where
\beq
OS^{-1}_{\alpha,c,\nu}(y) = \int G(x,y) \psi_0(x) \, dx
\eeq
where $G$ is the Green function constructed in the previous paragraph.
Combining these two identities we get

\begin{theorem}
Let $U_s$ be a monotonic, concave, and analytic shear layer profile, such that $U_s(0) = 0$ and such that $U_s(y)$ converges exponentially
fast to some constant $U_+$ as $y \to + \infty$. Then for any small $\alpha$, 
\beq \label{asymptotic}
v_\alpha (t) = \exp( L_{NS,\alpha} t)  P_{unstable,\alpha} v(0) + O(e^{- C \nu^{1/2} \langle t \rangle}) ,
\eeq
where $P_{unstable,\alpha}$ is the projection on unstable modes.

Moreover, there exists $C_-(\nu)$ and $C_+(\nu)$, converging to non vanishing limits as $\nu \to 0$,
 such that $P_{unstable,\alpha} = 0$ if $| \alpha | \le C_-(\nu) \nu^{1/4}$
or $| \alpha | \ge C_+(\nu) \nu^{1/6}$. For $C_-(\nu) \nu^{1/4} \le | \alpha | \le C_+(\nu) \nu^{1/6}$, then
$$
P_{unstable,\alpha}(v_0) = e^{\lambda(\alpha) t} (v_0 | v(\alpha)) + c.c.,
$$
where $v(\alpha)$ is an eigenvector of its adjoint $L_{NS}^\star$ with the same eigenvalue, and 
$$
\sup_\alpha | \Re \lambda(\alpha) | \sim C_2 \nu^{1/2}
$$
as $\nu \to 0$.
\end{theorem}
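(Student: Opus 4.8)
The plan is to run the Laplace strategy of Section 2 with the matrix $A$ replaced by $L_{NS,\alpha}$, using the Green function $G_{\alpha,c,\nu}=G_i+G_b$ built in the previous subsection. One starts from
$$e^{L_{NS,\alpha}t}\psi_0 = {\alpha\over 2\pi}\int_\Gamma e^{i\alpha c t}\,OS^{-1}_{\alpha,c,\nu}\psi_0\,dc, \qquad OS^{-1}_{\alpha,c,\nu}\psi_0(y)=\int_0^{+\infty}G_{\alpha,c,\nu}(x,y)\,\psi_0(x)\,dx,$$
with $\Gamma$ a contour to the right of the spectrum of $L_{NS,\alpha}$. The goal is to push $\Gamma$ down to the line $\{\Re\lambda=-C\nu^{1/2}\}$, collecting along the way the residues at the finitely many eigenvalues of $L_{NS,\alpha}$ with $\Re\lambda\ge-C\nu^{1/2}$, and to show that on the shifted contour $OS^{-1}_{\alpha,c,\nu}$ is bounded by a fixed power of $\nu^{-1}$, so that the leftover integral contributes $O(e^{-C\nu^{1/2}\langle t\rangle})$ after enlarging $C$.

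The eigenvalues are the zeros in $c$ of the dispersion function $D(\alpha,c,\nu):=\psi_{f,-}(0)\,\partial_y\psi_{s,-}(0)-\psi_{s,-}(0)\,\partial_y\psi_{f,-}(0)$, which equals $\det N$ up to a nonzero normalisation. First I would establish uniform resolvent bounds in three regimes: (i) for $c$ bounded away from $\mathrm{Range}(U_s)$, Orr Sommerfeld is a regular perturbation of a nondegenerate Rayleigh equation and $OS^{-1}=O(1)$; (ii) for $c$ within $O(\nu^{1/3})$ of $\mathrm{Range}(U_s)$, the viscous Airy inner layer (the corrector $\psi^c$) regularises the $(y-y_c)\log(y-y_c)$ singularity of the slow solution, and inverting the $4\times4$ matrix $M$ via its block structure ($A^{-1}$ coming from Rayleigh, $D^{-1}$ explicit, $C$ small) together with $N^{-1}=(\det N)^{-1}\times(\text{bounded})$ gives $\|OS^{-1}\|\lesssim\nu^{-k}$ for some fixed $k$; (iii) consequently the only obstruction to the shift is $\det N=0$, and on the line $\{\Re\lambda=-C\nu^{1/2}\}$ one has $|\det N|$ bounded below. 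Cauchy's residue theorem then yields \eqref{asymptotic}. Each unstable eigenvalue is simple, by transversality of the corresponding zero of $D$ in $c$, so its residue equals $e^{\lambda(\alpha)t}(v_0\,|\,v(\alpha))$ times the eigenfunction, with $v(\alpha)$ an eigenfunction of $L_{NS,\alpha}^{\star}$; adding the complex conjugate pair gives the stated form of $P_{unstable,\alpha}$.

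To locate the unstable band I would analyse $D=0$ in the two relevant scalings. In the lower regime $\alpha=\alpha_0\nu^{1/4}$, $c=c_0\nu^{1/4}$, the formulas \eqref{psif}--\eqref{psisd} reduce $D=0$ at leading order to \eqref{dispersionlimit}, $\alpha_0 U_+^2/U_s'(0)=c_0[1-Ti(-Ze^{5i\pi/6})]$ with $Z=\gamma y_c=O(1)$; the Tietjens function makes this equation have a root with $\Im c_0>0$ precisely when $\alpha_0>\alpha_c$, so that $P_{unstable,\alpha}=0$ for $|\alpha|\le C_-(\nu)\nu^{1/4}$ with $C_-(\nu)\to\alpha_c$. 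As $\alpha_0\to\infty$ this expansion breaks down and one must rescale so that $\gamma y_c$ stays bounded, equivalently $|\alpha|\sim\nu^{1/6}$ (the upper branch of the neutral curve); there $D=0$ limits to a different relation which has no root in $\Im c>0$, giving the upper cutoff $C_+(\nu)\nu^{1/6}$ with $C_+(\nu)$ tending to a nonzero constant. Transversality of $\Im c_0$ at the two crossings shows there is exactly one eigenvalue, plus its conjugate, in the window, so $P_{unstable,\alpha}$ has rank two there. Finally $|\Re\lambda(\alpha)|=|\alpha|\,|\Im c|\sim\nu^{1/2}\,|\alpha_0\Im c_0|$, and since $\alpha_0\mapsto\alpha_0\Im c_0$ has a unique global maximum $C_2$ at $\alpha_0=\alpha_M$ (Figure \ref{valeurlambda}), we get $\sup_\alpha|\Re\lambda(\alpha)|\sim C_2\nu^{1/2}$.

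The main obstacle is step (ii): the resolvent bounds for $OS_{\alpha,c,\nu}$ uniform both as $c\to\mathrm{Range}(U_s)$ and as $\nu\to0$. This is exactly where the full construction of Section 4 is needed — uniform control of the fast solutions $\psi_{f,\pm}$ (second primitives of Airy functions obtained through Langer's transformation) and of the corrected slow solution $\psi_{s,-}+\psi^c$, inversion of $M$ with its delicate near-degenerate block structure, and the lower bound on $|\det N|$ away from genuine eigenvalues. The sharp constants ($C_2$ and the nonvanishing limits of $C_\pm$) require in addition carrying the asymptotic expansion of $D$ to higher order in both the $\nu^{1/4}$ and the $\nu^{1/6}$ regimes, and checking that no further unstable eigenvalues hide in intermediate ranges of small $\alpha$. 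The complete argument is carried out in \cite{GGN3,GN,Bian4}.
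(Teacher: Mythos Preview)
Your overall strategy---contour representation, shift of $\Gamma$ below the real axis, collection of residues, and analysis of the dispersion relation $D(\alpha,c,\nu)=0$ in the $\nu^{1/4}$ and $\nu^{1/6}$ scalings---matches the paper's approach, which in fact only gives a two-sentence sketch here and defers the details to \cite{Bian4}.

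There is, however, one point where your framing of the obstruction is off, and it is precisely the mechanism the paper singles out. In your step (iii) you write that ``the only obstruction to the shift is $\det N=0$'', i.e.\ the poles of $N^{-1}$ in $G_b$. This is not the binding constraint. The Green function $G=G_i+G_b$ is built out of the fast solutions $\psi_{f,\pm}$, which are (up to Langer's change of variables) second primitives of Airy functions evaluated at $\gamma(y-y_c)$ with $\gamma\sim(\alpha/\nu)^{1/3}$. When you push $c$ into $\{\Im c<0\}$, the argument $-\gamma y_c$ acquires a phase that makes $Ai$ and its primitives grow exponentially; quantitatively, the growth becomes uncontrollable as soon as $|\Im c|$ exceeds a multiple of $|\gamma|^{-1}\sim(\nu/\alpha)^{1/3}$, which for $\alpha\sim\nu^{1/4}$ is $\nu^{1/4}$. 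This Airy growth, not the vanishing of $\det N$, is what fixes how far the contour can be lowered and hence why the remainder decays like $e^{-C\alpha\nu^{1/4}t}\sim e^{-C\nu^{1/2}t}$ in the relevant range of $\alpha$. Your resolvent bound (ii) would actually fail on the line $\{\Re\lambda=-C\nu^{1/2}\}$ if $C$ were taken too large, for this reason. The paper states this explicitly: ``the Airy functions are increasing when the imaginary part of their argument is negative, and we can only shift $\Gamma$ by $\nu^{1/4}$ in $\{\Im c<0\}$''. Once you incorporate this constraint into your step (ii), the rest of your outline is sound.
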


The proof of this result is detailed in \cite{Bian3}. The main idea is that, thanks to the viscosity, there is no longer any singularity when
$c$ is on the range of $U_s$. We can thus shift $\Gamma$ to negative $\Im c$. However, the Airy functions are increasing when the
imaginary part of their argument is negative, and we can only shift $\Gamma$ by $\nu^{1/4}$ in $\{ \Im c < 0 \}$, which gives a small decay rate,
of magnitude $C \nu^{1/4}$.
Note that, in strong contrast with linearised Euler equation, there exist growing modes for particular values of $\alpha$: a shear monotonic
shear layer is always linearly unstable provided the viscosity is small enough.
This is somehow paradoxical: adding viscosity destabilises the flow.


\section{Euler unstable shear layers}


In this section, we begin the discussion of Theorem \ref{theomain}, and construct a sequence of solutions $V^\nu$ which exhibits the
scale $\nu^{3/4}$. The construction of the scale $\nu^{13/16}$ will be done in section $7$. The rigorous justification of this first step
can be found in \cite{GGN3}.

We note that $V_0^\nu$ has a vertical scale $\nu^{1/2}$. The first step is to rescale time and space to this scale, namely to introduce
$$
T_1 = {t \over \nu^{1/2}}, \qquad 
X_1 = {x \over \nu^{1/2}}, \qquad
Y_1 = {y \over \nu^{1/2}} .
$$
Navier Stokes remains unchanged by this change of variables, except that the viscosity is now
$$
\nu_1 = \nu^{1/2} .
$$
From now on we work in these rescaled variables.
The starting point is a profile $U_s(0,\cdot)$ which is spectrally unstable for linearised Euler equations, namely a profile $U_s(0,\cdot)$
such that there exists $\alpha$, $c_{1,0}$ and $\phi_{1,0}$, solutions of the Rayleigh equation 
\beq \label{Ray}
\Bigl( U_s(0,Y_1) - c_{1,0} \Bigr) (\partial_{Y_1}^2 - \alpha^2) \phi_{1,0} =  U_s''(0,Y_1) \phi_{1,0} ,
\eeq
with
\beq \label{Ray2}
\phi_{1,0}(0) = 0
\eeq
and with $\Im c_{1,0} > 0$ (unstable mode).

We start from $(c_{1,0},\phi_{1,0})$ of (\ref{Ray}) to construct a solution of Orr Sommerfeld equation
and thus to build an instability for Navier Stokes equations.
Note that $\phi_{1,0}$  does not fit the boundary condition 
$\partial_y \phi(0) = 0$.
We therefore have to add a "boundary layer" to $\phi_{1,0}$ in order to ensure this boundary layer condition.
Its size must be such that $\nu_1 \partial_{Y_1}^4$ balances the term in $\partial_{Y_1}^2$, which leads to a size of order $\nu_1^{1/2}$
and to look for solutions of  Orr Sommerfeld equations of the form
\beq \label{exp1}
\phi(Y_1) = \sum_{n \ge 0} \nu_1^{n/2} \phi_n^{int}(Y_1) + \sum_{n \ge 1} \nu_1^{n/2} \phi_n^{bl}(\nu_1^{-1/2} Y_1)  
\eeq
together with the expansion
\beq \label{exp1bis}
c_1 = \sum_n \nu_1^{n/2} c_{1,n}.
\eeq
In this expansion $\phi_n^{int}$ refers to an "interior" behaviour, and $\phi_n^{bl}$ to a "boundary layer" behaviour. 
Moreover we start with $\phi_0^{int} = \phi_{1,0}$. 
The first boundary layer profile satisfies
\beq \label{hie2}
\nu_1 \partial_{Y_1}^4 \phi_1^{bl} = (U_s(0,0) - c_1)  \partial_Y^2 \phi_1^{bl} ,
\eeq
together with the boundary condition
\beq \label{hie3}
\partial_{Y_1} \phi_1^{bl}(0) = - \partial_{Y_1} \phi_0^{int}(0).
\eeq
Hence
$$
\phi_1^{bl}( \nu_1^{-1/2}  Y_1) 
= - \partial_{Y_1} \phi_0^{int}(0) \nu_1^{1/2} \mu^{-1} \Bigl( 1   - e^{- \nu_1^{-1/2} \mu Y_1} \Bigr), \qquad \mu = (- c_1) ^{1/2}.
$$
Let us now compute the corresponding velocity field. Using
$$
(u^{bl}, v^{bl} ) = \nabla_{X_1,Y_1}^\perp \Bigl( e^{i \alpha_1 (X_1 - c_1 T_1)} \phi^{bl}(\nu_1^{-1/2} Y_1) \Bigr),
$$
we see that $u^{bl}$ and $v^{bl}$ have the following asymptotic expansions
$$ 
u^{bl} = -  e^{i \alpha_1 (X_1 - c_1 T_1)} e^{- \nu_1^{-1/2} \mu Y_1} \partial_{Y_1} \phi_0^{int}(0) + \hbox{c.c.} + O(\nu_1^{1/2})
$$
and
 $$
 v^{bl} = - i \alpha_1 \nu_1^{1/2} \mu^{-1} 
 e^{i \alpha_1 (X_1 - c_1 T_1)}
 \Bigl( 1   - e^{- \nu_1^{-1/2} \mu Y_1} \Bigr)   \partial_{Y_1} \phi_0^{int}(0) 
  + \hbox{c.c.} + O(\nu_1).
$$
We note that the horizontal speed in the boundary layer is of order $O(1)$ and exactly compensates the interior component
of the velocity at the boundary. The vertical speed is of size $O(\nu_1^{1/2})$ and vanishes as $\nu \to 0$.

The construction of \cite{GN1} exactly relies on this formal analysis and fully justifies it.
It provides the existence of a solution $V_1^\nu$ of Navier Stokes equation with forcing term $F^\nu$
which is of the form $V_1^\nu = (u^\nu_1,u^\nu_2)$ with
$$
u_1^\nu(t,x,y) =  U_s(T,Y_1) + \nu^N e^{i \alpha_1 (X_1 - c_1 T_1)} \partial_{Y_1} \phi_0^{int}(Y_1) 
- \nu^N e^{i \alpha_1 (X_1 - c_1 T_1)} e^{- \nu_1^{-1/2} \mu Y_1} \partial_{Y_1} \phi_0^{int}(0)
$$
$$
 + \hbox{c.c.} + \cdots,
$$
$$
v_1^\nu = - i \nu^N \alpha_1 \nu_1^{1/2} \mu^{-1}  e^{i \alpha_1 (X_1 - c_1 T_1)} 
\Bigl( 1   - e^{- \nu_1^{-1/2} \mu Y_1} \Bigr)   \partial_{Y_1} \phi_0^{int}(0) 
  + \hbox{c.c.} + \cdots.
$$
This expansion may be justified till the perturbation reaches an order $O(1)$, namely becomes of the order of the shear flow itself.

Note that in the original $y$ variable, the size of this new boundary layer is of order $\nu^{3/4}$.
Therefore at this stage, we have {\it three} vertical scales:
\begin{itemize}
\item[-]
$O(1)$ (characteristic size of the domain). At this scale the flow is constant and equals $(U_+,0)$.
\item[-]
$O(\nu^{1/2})$ (characteristic size of the shear layer profile or boundary layer profile). At this scale
the flow is the sum of $(U_s(Y_1),0)$ and of an exponentially growing periodic perturbation, solution of Rayleigh equation.
This  instability creates a velocity at the boundary which periodically changes sign in space and time, in contradiction with
Dirichlet boundary condition. 
\item[-]
$O(\nu^{3/4})$ (size of the sublayer of the unstable mode).
 The horizontal part of the velocity in this sublayer recovers Dirichlet boundary condition. The vertical
one ensures incompressibility by creating a small flow of order $O(\nu^{1/4})$, which leaves or enters this sublayer.  
\end{itemize}
There are {\it two} horizontal sizes, namely
\begin{itemize}
\item[-] $O(1)$ (size of the domain),
\item[-] $O(\nu^{1/2})$ (horizontal periodicity of the instability). Note that this size is absent in Prandtl's boundary layer.
Instability creates a small scale in $x$, which is absent from Prandtl's Ansatz. This scale is also "killed" if we assume
analyticity of the initial data.
\end{itemize}
Moreover we have {\it two} times scales
\begin{itemize}
\item[-] $O(1)$ (characteristic time scale of the evolution of $U_s$)
\item[-] $O(\nu^{1/2})$ (characteristic time scale of the instability in the boundary layer) 
\end{itemize}
All these characteristic scales appear in classical physics books like \cite{Reid}.
As we will see in section $7$, a third time scale will appear, linked with the instability of the sublayer of size
$O(\nu^{3/4})$, together with a new and unexpected "sub-sub-layer".


\section{Euler stable shear layers}


In this section we discuss Theorem \ref{theofirst2}. As in the previous section we introduce $T_1$, $X_1$ and $Y_1$.
However, as $U_s(0,\cdot)$ is stable for linear Euler equation,
we can not build an instability starting from Rayleigh. Instead we use the instability built in subsection $4.2$. For $\alpha_1$ in the range
$$
A_1 \nu_1^{1/4} \le \alpha_1 \le A_2 \nu_1^{1/6} 
$$
where $A_1$ and $A_2$ are two constants, there exists unstable modes of Orr Sommerfeld equations with a growing rate
$$
\alpha_1 \Im c_1 \sim \nu_1^{1/2} \sim \nu^{1/4}.
$$
This leads to instabilities within times $T_{insta,1}$ of order $\nu^{-1/4}$ in rescaled time, and therefore within real times of order
$$
T_{insta,1} \sim \nu^{-1/4} \nu^{1/2} \sim \nu^{1/4} \ll 1.
$$
Note that the unstable mode $\psi_1$ itself has a complex vertical structure, with a critical layer at $y_c$ with
$U_s^0(y_c) = c_1$, namely $y_c \sim \nu^{1/8}$ in rescaled variable, or $\nu^{1/8} \nu^{1/2} = \nu^{5/8}$ in original variables..
It also has a "recirculation" layer of size $\alpha_1^{-1} \sim \nu^{-1/4}$ in rescaled variable, or $\nu^{1/4}$ in original variables.

We note that there exist {\it four} vertical scales, namely
\begin{itemize}
\item[-] $O(1)$, characteristic size of the domain,
\item[-]  $O(\nu^{3/8})$, characteristic size of the recirculation layer,
\item[-]  $O(\nu^{1/2})$, characteristic size of $U_s$ (Prandtl's boundary layer size),
\item[-]  $O(\nu^{5/8})$, characteristic size of the critical layer (new layer created by the instability).
\end{itemize}
There exist {\it two} horizontal scales
\begin{itemize}
\item[-]  $O(1)$, characteristic size of the domain
\item[-]  $O(\nu^{1/8})$, periodicity of the instability arising in the layer.
\end{itemize}
There exits {\it two} time scales
\begin{itemize}
\item[-]  $O(1)$, characteristic time of the evolution of $V^\nu$,
\item[-]  $O(\nu^{1/4})$, characteristic time of the growth of the instability.
\end{itemize}
A crucial point is to investigate the size that these perturbations may reach. Perturbations as constructed in section $2$, namely
based on instabilities of Euler equations, reach a size $O(1)$. However the situation is more delicate for instabilities constructed
in section $3$, namely purely viscous ones, since their growth is very small, namely over times of order $\nu^{-1/8}$ in rescaled time.

Let $\phi(T)$ be the size of the instability. We observe that quadratic interactions have a different wavenumber, namely $\pm 2 \alpha_1$,
thus the feedback only occurs with cubic interactions, which have wave numbers $\pm \alpha_1$ and $\pm 3 \alpha_1$.
Thus, if we follow a bifurcation approach, $\phi(t)$ approximately satisfies an equation of the form
\beq \label{bifurcation}
\dot \phi(T) = \alpha_1 c_1 \phi(T) + A | \phi(T) |^2 \phi(T) + \cdots,
\eeq
with $\Im (\alpha_1 c_1)$ or order $\nu^{1/8}$.
The evolution of $\phi(T)$ then depends on the sign of $\Re A$. 

\begin{itemize}

\item[-]  If $\Re A < 0$, then the cubic interactions tend to "tame" the instability. In this case it is possible to prove that
$\phi(T)$ reaches a magnitude $\nu^{1/16}$, but not more.

\item[-]  If $\Re A = 0$, then cubic interactions cancel, and we have to go up to quintic interactions. Then $\phi(T)$ is expected
to reach $\nu^{1/32}$.

\item[-]  If $\Re A > 0$, then there is no obstacle for $\phi(T)$ to reach an order $O(1)$.

\end{itemize}
Note that $A$ is the projection on the unstable mode of the cubic interaction of this unstable mode with itself.
Preliminary numerical computations \cite{Bian2} indicate that we are in the first case. 
It is possible to prove that the perturbation reaches at least a size $O(\nu^{1/16})$.
It therefore seems that we are in a bifurcation context, of Hopf's type, which may lead to the onset of rolls, a direction of research
which deserves further investigations.


\section{Multiple layers}


We now turn to the discussion of the $\nu^{13/16}$ sublayer of Theorem \ref{theomain} and go on with section $5$.
In the section $5$ we have constructed a sublayer of size $\nu^{3/4}$ in which the typical velocity reaches a magnitude $O(1)$.
We can construct a Reynolds number for this $\nu^{3/4}$ sublayer by combining a typical length (its size, namely
$\nu^{3/4}$), a typical velocity (the magnitude of $V_1^\nu$, namely $U_+$) and the viscosity ($\nu$). This leads
to a Reynolds number for this sublayer of order
$$
Re_2 \sim {\nu^{3/4} U_+ \over \nu} \sim \nu^{-1/4} .
$$
Hence, the Reynolds number of the sublayer goes to infinity as $\nu$ goes to $0$.
But  any shear layer is linearly unstable at high Reynolds number.
Thus, we expect linear instabilities to appear in this sublayer.

To study this instability we consider $V^\nu$, as constructed in section $5$, at a time $T_1^\nu$ where the $\nu^{3/4}$ 
layer has fully developed, i.e. has reached a magnitude $O(1)$. We consider this time as the new initial time.
We  then rescale time and space in order to focus on the $\nu^{3/4}$ layer, and introduce
$$
T_2 = \nu^{-3/4}t, \quad X_2 = \nu^{-3/4} x, \quad Y_2 = \nu^{-3/4} y .
$$
After rescaling, Navier Stokes equations remain unchanged, excepted for the viscosity which is now
$$
\nu_2  =  \nu^{1/4} \sim {1 \over Re_2}.
$$
Let us now describe the flow $V^\nu(T_1^\nu,\cdot,\cdot)$ near $x = 0$, in the rescaled variables $X_2$ and $Y_2$, namely let us describe
$V^\nu(T_1^\nu,\nu^{3/4} X_2, \nu^{3/4} Y_2)$. We recall that $V^\nu(T_1^\nu,\cdot,\cdot)$ has scales $1$ and $\nu^{1/2}$ in $x$
and $\nu^{1/2}$ and $\nu^{3/4}$ in $y$. Hence, in the variables $X_2$ and $Y_2$, $V^\nu(T_1^\nu,\cdot,\cdot)$ changes
on scales of order $\nu^{-1/4}$ in the $X_2$ variable, namely is almost constant. In the $Y_2$ variable it has the scales $1$ and
$\nu^{-1/4}$. In other words
\beq \label{description}
V^\nu(T_1^\nu + \nu^{3/4} T_2,\nu^{3/4} X_2, \nu^{3/4} Y_2) = U_1 + O(\nu^{1/4} X_2 + \nu^{1/4} Y_2)
\eeq
where
$$
U_1 = (U_1^\nu,0),
$$
with
$$
U_1^\nu(X_2,Y_2) = -  e^{i \alpha_1 \nu^{1/4} (X_2 - c_2 T_2)} \Bigl[ 1 - e^{-\mu Y_2} \Bigr]  \partial_{Z_1} \phi_0^{int}(0) + \hbox{c.c.}.
$$
Note that the boundary layer has an exponential behaviour in $Y_2$, 
and periodically depends  on the $X_2$ variable, with a very large spatial period of order $O(\nu^{-1/4})$ in this variable.
We note that $U_1^\nu(0,\cdot)$ has no inflection point and is therefore  stable for Rayleigh's equation.
However, $U_1^\nu$ depends on $X_2$, which was not the case in section $5$ and $6$. 

The first step is to "freeze" the $X_2$ variable at $X_2 = 0$ and to look for an unstable mode of the form
$$
v = \nabla^\perp \Bigl(  e^{\alpha_2 ( X_2 - c_2 T_2)} \psi_2(Y_2) \Bigr) 
$$
for the profile $U_1^\nu(0,Y_2)$. Then $\psi_2$ must satisfy the following Orr Sommerfeld equations
\beq \label{alpha2insta}
\eps_2 (\partial_{Y_2}^2 - \alpha_2^2)^2 \psi_2 = (U_1^\nu(0,Y_2) - c_2) (\partial_{Y_2}^2 - \alpha_2^2) \psi_2 
- \partial_{Y_1}^2 U_1^\nu(0,Y_2) \psi_2 .
\eeq
As proved in \cite{GGN3} and recalled in section $6$, for $\alpha_2$ in the range
$$
A_1 \nu_2^{1/4} \le \alpha_2 \le A_2 \nu_2^{1/6} 
$$
where $A_1$ and $A_2$ are two constants, there exist unstable modes to (\ref{alpha2insta}) with a growth rate
$$
\alpha_2 \Im c_2 \sim \nu_2^{1/2} \sim \nu^{1/8}.
$$
This leads to instabilities within times $T_{insta,2}$ of order $\nu^{-1/8}$ in rescaled time, and therefore within real times of order
$$
T_{insta,2} \sim \nu^{-1/8} \nu^{3/4} \sim \nu^{5/8} \ll \nu^{1/2}.
$$
Hence the instabilities of this sublayer grow faster than the initial Prandtl instability.
Note that $T_{insta,2}$ is large in $T_2$ scale. In the rescaled variable $T_2$, the growth of the instability is slow, mainly because
this instability has large structures in $X_2$.
Let us choose $\alpha_2$ of order $\nu_2^{1/4} \sim \nu^{1/16}$ to fix the ideas. This corresponds to structures in the horizontal variable of size
$\nu^{-1/16}$ in $X_2$ variable, or $\nu^{3/4 - 1/16} = \nu^{11/16}$ in the initial $x$ variable.
Note that $c_2$ is of order $\nu_2^{1/4} \sim \nu^{1/16}$.

Note that this unstable mode $\psi_2$ itself has a complex vertical structure, composed of three scales. Of course $Y_2 \sim 1$,
but also a "critical layer" and a "recirculation layer". 
First the "critical layer" is defined by 
$$
U_1^\nu(Z_c) = c_2.
$$
We note that
$$ 
Z_c  \sim \nu^{1/16} ,
$$
or, in initial variables, 
$$
z_c \sim \nu^{3/4} \nu^{1/16} \sim \nu^{13/16}.
$$
There is also a "recirculation layer", of size $\alpha_2^{-1}$ in $Y_2$ variable, corresponding to the kernel of $\partial_{Y_2}^2 - \alpha_2^2$.
It size is $\nu_2^{-1/4} \nu^{3/4} = \nu^{-1/16} \nu^{3/4} = \nu^{11/16} \ll \nu^{1/2}$. It is thus smaller that Prandtl's layer.
Note that this vertical scale corresponds to the scale in $X_2$: the instability creates large scale rolls to handle the flow in the
critical layer.

In the previous paragraph we have constructed an instability for  $(U_1^\nu,0)$ and not for the real layer $(U_1^\nu,V_1^\nu)$.
We have now to check whether the instability can survive to the spatial dependency of the background, and to the small upward velocity. 
The difference between $U_1$ and $(U_1^\nu,V_1^\nu)$
has two aspects: we have a dependence on $\nu^{1/4} Y_2$ and an additional vertical velocity $V_1^\nu$ is of order $\nu^{1/4}$.

Let us discuss the first aspect. We will pick up the most unstable mode $\alpha_2$, namely the mode with the largest
growth rate $\alpha_2 \Im c_2$, and construct a "wave packet" of instabilities near this $\alpha_2$. 
Our analysis of growth is valid as long as this wave packet does not travel to areas where the underlying flow notably changes.
Hence it is only valid as long as it travels  on length scales much smaller than $\nu^{-1/4}$ in rescaled variables, hence on length scales
much smaller than $\nu^{1/2}$ in original variables.

The wave packet travels with group velocity
$$
\sigma_2 = {\partial(\alpha_2 c_2) \over \partial \alpha_2} 
=  c_2 + \alpha_2  \frac{\partial c_2}{\partial \alpha_2}  \sim \nu_2^{1/4} \sim \nu^{1/16}.
$$
The maximum travel distance, within the instability time $T_{insta,2}$, is 
$$
Y_2  = \sigma_2 T_{insta,2}\approx \nu^{1/16} \nu^{-1/8} = \nu^{-1/16} ,
$$
or in the original coordinates, the maximum travel distance is
$$ 
\nu^{3/4} \nu^{-1/16} = \nu^{11/16}  \ll \nu^{1/2}.
$$
As a consequence we may "localise" the instability and construct an instability for the real layer $(U_1^\nu,V_1^\nu)$ starting
from an instability of $(U_1^\nu,0)$ (see \cite{Bian4} for a rigorous proof).
This ends the discussion of Theorem \ref{theomain}.
It turns out that this instability is too small in magnitude to allow the construction of an other sublayer, and to iterate the construction.


\section{Discussion}


As mentioned in the introduction, the various results on the behavior of solutions to Navier Stokes equations at low viscosity are somehow paradoxical.

\medskip

A first series of results states that Prandtl's analysis is true in small time and for analytic data. This is exactly the physical statement. However
the smoothness of the data is too strong, and hides the underlying existing instability.
Let us give a simple example. Let us consider the complex transport equation
$$
\partial_t \phi + i \partial_x \phi = 0.
$$
Then the Fourier transform satisfies
\beq \label{ill}
\partial_t \phi(t,\xi) = \xi \phi(t,\xi),
\eeq
and of course
$$
\phi(t,\xi) = e^{ \xi  t} \phi(0,\xi).
$$
In particular (\ref{ill}) is ill posed in any Sobolev space. However if we introduce the analytic norm
$$
\| \phi (t,\cdot) \|_\sigma = \sup | \phi(t,\xi) | e^{- \sigma | \xi |} 
$$
then we see that
$$
\| \phi(t,\cdot) \|_{\sigma - t} \le \| \phi(0,\cdot) \|_{\sigma},
$$
hence $\phi$ is locally well posed in analytic space. The analyticity simply "kills" the instability, which is hidden until $t = \sigma$ where
the radius of analyticity vanishes.

\medskip

Let us now discuss the multilayers instability. A first remark is that a shear layer is unstable provided its Reynolds number is large enough,
usually of order $1000$ to  $10000$ for an exponential profile. We note that the $\nu^{3/4}$ appears if $Re_1 > 1000$ to $10000$, which means that the initial
Reynolds number is larger than $10^6$ or even $10^8$. The third layer appears if $Re_2 > 1000$ to $10000$, namely if the Reynolds number is of order
$10^9$ or even $10^{12}$, namely at very high Reynolds numbers. In practise it is impossible to observe laminar flows at such Reynolds number, since
small perturbations on the walls would immediately lead to a turbulent behaviour.
Thus mathematical multiple layers instabilities appear for very large Reynolds numbers, beyond experiments.

%
%
%
%
%
%
%
%
%
%
%
%
%



\begin{thebibliography}{99}



\bibitem{Bedrossian-Zelati-Vicol-2019}J. Bedrossian, M. Coti Zelati,  V. Vicol: 
Vortex axisymmetrization, inviscid damping, and vorticity depletion in the linearized 2D Euler equations, {\it Ann. PDE}, 5(4), 2019.


\bibitem{Bian2} D. Bian, E. Grenier: Onset of nonlinear instabilities in viscous boundary layers, {\it preprint}, 2022.

\bibitem{Bian3} D. Bian, E. Grenier: Asymptotic behavior of solutions of linearized Euler equations, {\it preprint}, 2023.

\bibitem{Bian4} D. Bian, E. Grenier: Asymptotic behavior of solutions of linearized Navier Stokes equations
in the long wave regime, {\it  preprint}, 2023.

\bibitem{Bian5} D. Bian, E. Grenier: Stability of boundary layers and of Prandtl's layer, {\it preprint}, 2023.


\bibitem{Blasius} H. Blasius: Grenzschichten in Fl\"ussigkeiten mit kleiner Reibung,
{\it Zeitschrift f\"ur Mathematik und Physik}, Band 56, Heft 1, 1908.


\bibitem{Bouchet} F. Bouchet, H. Morita: Large time behavior and asymptotic stability of the 2D Euler and linearized Euler equations, 
{\it Physica D}, 239, 948--966, 2010.

\bibitem{DM} A.-L. Dalibard, N. Masmoudi: Separation for the stationary Prandtl equation, {\it Publications math\'ematiques de l'IHES},
139, 187--297, 2019.


\bibitem{EE} B. Engquist, W. E: Blow up of solutions of the unsteady Prandtl's equation,  {\it Comm. Pure Appl. Math.}, 50(12), 1287--1293, 1998.

\bibitem{GR08}
F.~Gallaire and F.~Rousset:
 Spectral stability implies nonlinear stability for incompressible
  boundary layers, {\it Indiana Univ. Math. J.},  57(4), 1959--1975, 2008.


\bibitem{DGV2} D. G\'erard-Varet, Y. Maekawa and  N. Masmoudi:
Gevrey stability of Prandtl expansions for 2D Navier-Stokes, {\it Duke Math. J.}, 167(13), 2531--2631, 2018.

\bibitem{Reid} P.~G. Drazin, W.~H. Reid: Hydrodynamic stability, Cambridge Monographs on Mechanics and
Applied Mathematics. Cambridge University, Cambridge--New York, 1981.

\bibitem{Blasius3} E. Grenier, B. Pausader: Instability of Blasius' boundary layers, {\it in preparation}, 2023.

\bibitem{GN}  E. Grenier, T. Nguyen:
Green function for linearized Navier-Stokes around a boundary shear layer profile for long wavelengths,
{\it  Ann. Inst. H. Poincar\'e C Anal. Non Lin\'eaire}, 40(6), 1457--1485, 2023.

\bibitem{GN1} E. Grenier, T. Nguyen:
$L^\infty$ instability of Prandtl layers, {\it Ann. PDE}, 5(2), 2019.

\bibitem{GGN3}
 E. Grenier, Y. Guo and T. Nguyen: Spectral instability of characteristic boundary layer flows, {\it Duke Math J.}, 165(16), 3085--3146, 2016.

\bibitem{Blasius1} Y. Guo, S. Iyer:  Validity of steady Prandtl layer expansions, {\it Comm. Pure Appl. Math.}, 382, 1403--1447, 2022.

\bibitem{Ionescu-Jia-CMP-2020} A. Ionescu, H. Jia: Inviscid damping near the Couette flow in a channel, {\it Comm. Math. Phys.}, 374(3), 2015--2096, 2020.

\bibitem{Ionescu-Iyer-Jia} A. Ionescu, S. Iyer,  H. Jia:  Linear inviscid damping and vorticity depletion for non-monotonic shear flows, arXiv:2301.00288v1.

\bibitem{Kato} T. Kato: Remarks on zero viscosity limit for nonstationary Navier-Stokes flows with boundary, {\it Seminar on nonlinear partial differential equations (Berkeley, Calif., 1983), Math. Sci. Res. Inst. Publ., vol. 2, Springer, New York}, 1984, pp. 85–98

\bibitem{Hei} { W. Heisenberg:} $\ddot{\mbox{U}}$ber Stabilit$\ddot{\mbox{a}}$t 
und Turbulenz von Fl$\ddot{\mbox{u}}$ssigkeitsstr$\ddot{\mbox{o}}$men,  {\it Ann. Phys.}, 74, 577--627, 1924.


\bibitem{Lin0} C. C. Lin:  On the stability of two-dimensional parallel flow, {\it Proc. Nat. Acad. Sci. U.S.A.}, 30, 316--323, 1944.


\bibitem{LinBook} C. C. Lin: The theory of hydrodynamic stability, {\it Cambridge University Press}, 1955.

\bibitem{Maekawa}
Y. Maekawa: On the inviscid limit problem of the vorticity equations for viscous
  incompressible flows in the half-plane, {\it Comm. Pure Appl. Math.}, 67(7), 1045--1128, 2014.

\bibitem{Blasius2} S. Iyer, N. Masmoudi: Global in $x$ stability of steady Prandtl Expansion for 2D Navier-Stokes flows, $2021$.

\bibitem{dev} W. H. Reid: The stability of Parallel Flows, in Basic Developments in Fluid dynamics, M. Holt ed., Volume 1, 
Academic Press, 1965.

\bibitem{Caf1}
M.~Sammartino and R.~E. Caflisch: Zero viscosity limit for analytic solutions of the Navier-Stokes
  equation on a half-space. {I}. Existence for Euler and Prandtl
  equations, {\it Comm. Math. Phys.}, 192(2), 433--461, 1998.

\bibitem{Caf2}
M. Sammartino and R. E. Caflisch: Zero viscosity limit for analytic solutions of the Navier-Stokes
  equation on a half-space. {II}. Construction of the Navier-Stokes
  solution,  {\it Comm. Math. Phys.}, 192(2), 463--491, 1998.




\bibitem{Schensted} I.~V. Schensted: Contributions to the theory of hydrodynamic stability, PhD thesis, University of Michigan, 1960.

\bibitem{Schlichting}  H. Schlichting:  Boundary layer theory,
Translated by J. Kestin. 4th ed. McGraw--Hill Series in Mechanical Engineering. McGraw--Hill Book Co., Inc., New York, 1960.

\bibitem{Wang} X.-P. Wang, Y.-G. Wang, Z. Xin: Boundary layers in incompressible Navier-Stokes equations with Navier boundary
conditions for the vanishing viscosity limit, {\it Comm. Math. Sci.},  8(4): 965--998, 2010.

\bibitem{Wei-Zhang-Zhao-CPAM-2018} D. Wei, Z. Zhang, and W. Zhao:
Linear inviscid damping for a class of monotone shear flow in Sobolev spaces, {\it Comm. Pure Appl. Math.}, 71, 617--687, 2018.

\bibitem{Wei-Zhang-Zhao-APDE-2019} D. Wei, Z. Zhang and W. Zhao: 
Linear inviscid damping and vorticity depletion for shear flows, {\it Ann. PDE}, 5(1), 2019.

\bibitem{Wei} D. Wei, Z. Zhang and W. Zhao:
Linear inviscid damping and enhanced dissipation for the Kolmogorov flow, {\it Adv. Math.}, 362,  2020.



\end{thebibliography}
\end{document}